\DeclarePairedDelimiter\abs{\lvert}{\rvert}
\providecommand{\U}[1]{\protect\rule{.1in}{.1in}}
\newtheorem{theorem}{Theorem}
\newtheorem{definition}[theorem]{Definition}
\newtheorem{problem}[theorem]{Problem}
\newtheorem{proposition}[theorem]{Proposition}
\newtheorem{remark}[theorem]{Remark}
\newtheorem{assumption}[theorem]{Assumption}
\newenvironment{proof}[1][Proof]{\noindent\textbf{#1.} }{\ \rule{0.5em}{0.5em}}
\begin{document}

\title{Generalized exponential basis for efficient solving of homogeneous diffusion free boundary problems: Russian option pricing}
\author{Igor V. Kravchenko\\{\footnotesize {Instituto Universit\'{a}rio de Lisboa (ISCTE-IUL),
Edif\'{\i}cio II, Av. Prof. An\'{\i}bal Bettencourt,}}\\{\footnotesize {1600-189 Lisboa, Portugal}} \\
\footnotesize{Business Research Unit (BRU-IUL), Lisboa, Portugal}\\
\and Vladislav V. Kravchenko\\
\footnotesize{\footnotesize {Departamento de Matem\'{a}ticas, CINVESTAV del IPN, Unidad
Quer\'{e}taro, Libramiento}}\\{\footnotesize {Norponiente No. 2000, Fracc. Real de Juriquilla,
Quer\'{e}taro, Qro. C.P. 76230 M\'{e}xico}}\\
\and Sergii M. Torba\\{\footnotesize {Departamento de Matem\'{a}ticas, CINVESTAV del IPN, Unidad
Quer\'{e}taro, Libramiento}}\\{\footnotesize {Norponiente No. 2000, Fracc. Real de Juriquilla,
Quer\'{e}taro, Qro. C.P. 76230 M\'{e}xico}}
\and Jos\'{e} Carlos Dias\\{\footnotesize {Instituto Universit\'{a}rio de Lisboa (ISCTE-IUL),
Edif\'{\i}cio II, Av. Prof. An\'{\i}bal Bettencourt,}}\\{\footnotesize {1600-189 Lisboa, Portugal}} \\\footnotesize{Business Research Unit (BRU-IUL), Lisboa, Portugal}
}

\date{}

\maketitle

\begin{abstract}
This paper develops a method for solving free boundary problems for time-homogeneous diffusions. We combine the complete exponential system of solutions for the heat equation, transmutation operators and recently discovered Neumann series of Bessel functions representation for solutions of Sturm-Liouville equations to construct a complete system of solutions for the considered partial differential equations. The conceptual algorithm for the application of the method is presented. The valuation of Russian options with finite horizon is used as a numerical illustration. The solution under different horizons is computed and compared to the results that appear in the literature.
\end{abstract}

\noindent \emph{JEL Classification:} G13, C60.

\section{Introduction}

One of the approaches for solving boundary value problems for partial
differential equations (PDE's) is based on complete systems of solutions
(CSS). In particular, several CSS have been used in different models such as:
fundamental solutions (the well known method of fundamental solutions or
discrete sources) in \cite{Kupradze1967}, \cite{Alexidze1991}, \cite{Fairweather1998} and \cite{Doicu2000}; heat polynomials in \cite{colton1976solution}, \cite{Reemtsen1982},
\cite{colton1984numerical}, \cite{sarsengeldin2014analytical}  and
\cite{KravchenkoOtero2017}; wave polynomials in \cite{khmelnytskaya2013wave}
among many others. For the present paper the following family $\{e_{n}^{\pm
}\}_{n\in\mathbb{N}}$ of exponential solutions of the heat equation
\begin{equation}
h_{xx}=h_{t},\label{eq_heatEq}%
\end{equation}
defined as
\begin{equation}
e_{n}^{\pm}\left(  x,t\right)  =\exp(\pm i\omega_{n}x-\omega_{n}%
^{2}t),\label{eq_en_trig}%
\end{equation}
are of particular interest. Here the constants $\omega_{n}$ are chosen such
that the limit
\begin{equation}
d:=\lim_{n\rightarrow\infty}\frac{n}{\omega_{n}^{2}}>0\label{eq_cond_w_conv}%
\end{equation}
exists. In \cite{colton1980analytic}, the completeness of this system of
solutions was proved for bounded domains satisfying certain smoothness
properties.

As a rule, the approach based on CSS cannot be directly applied to equations
with variable coefficients, because CSS are not available in a closed form. In
\cite{colton1976solution}, it was developed the idea to extend the approach
of CSS to equations with variable coefficients with the aid of transmutation
operators whenever they are known or can be constructed efficiently. However,
the construction of the transmutation operators is itself a complicated task.

In the present paper, we propose the construction of the CSS generalizing
exponential solutions \eqref{eq_en_trig} for the equation
\begin{equation}
\mathbf{C}u(y,t):=\frac{1}{w\left(  y\right)  }\left(  \frac{\partial
}{\partial y}\left(  p\left(  y\right)  \frac{\partial}{\partial y}\right)
-q\left(  y\right)  \right)  u(y,t)=u_{t}(y,t).\label{eq_parabolIntro}%
\end{equation}

These generalized exponential solutions represent a CSS for equation
\eqref{eq_parabolIntro} and are the images of the exponential solutions
\eqref{eq_en_trig} under the action of the transmutation operator. Moreover,
they can be computed by a simple robust recursive integration procedure which
does not require the knowledge of the transmutation operator itself. This
makes possible to extend the numerical methods (minimization problems) for
free boundary problems (FBP's) for the heat equation to the time-homogeneous
parabolic equations, in particular, to the finite horizon Russian option
(FHRO) valuation problem that we analyze in detail in this paper.

In \cite{KrKrTorba2017}, a numerical method was developed for the classical one dimensional Stefan like problem for the time-homogeneous parabolic operator using the CSS of the transmuted heat polynomials, that was referred to as THP method. It is well known that the CSS based on  polynomials result in badly conditioned matrices, making the application of THP complicated for the practical computations. This is the case for the FHRO. Fortunately, there are alternative CSS for the heat equation \eqref{eq_heatEq}, for which we also know their transmuted images.

In practice, the FBP's are often challenging for numerical methods. For example, the boundary conditions arising in relation to the FHRO problem are
non consistent (the solution or its derivative can not be continuous along the boundary). This leads to all sort of different computational issues. We
present a step by step algorithm and discuss the numerical issues that we have encountered. The method that we propose takes into account known properties of the solution (such as monotone increase of the free boundary) and of the functions from the CSS (possibility to automatically satisfy one of the boundary conditions) making the computations easier and more predictable.


Even though there are several quantitative studies on the FHRO, e.g.
\cite{duistermaat2005finite}, \cite{kimura2008valuing} and
\cite{jeon2016integral}, it seems that there is no agreement on the exact
value for the option. We contribute to this discussion confirming the values from \cite{jeon2016integral} and providing possible explanation of the discrepancy with \cite{kimura2008valuing}.

\begin{figure}
\centering
  \includegraphics*[trim= 0 0 0 0,scale=1.00]{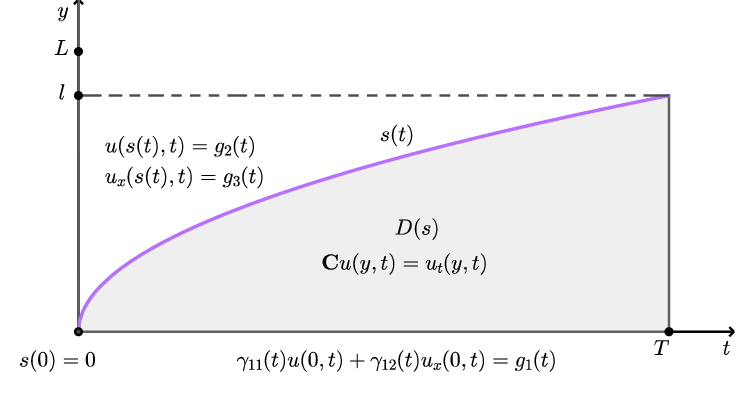}
  \caption{Free boundary problem.}
  \label{fig_FBP}
\end{figure}


The parabolic FBP's arise in many fields, and hence the method proposed has a lot of potential for further applications and developments. In particular, for the financial engineering applications presenting path-dependence and early exercise features such as lookback options, American options, etc. In this paper, for the FHRO, we are restricted to the \cite{BlackScholes1973} and \cite{Merton1973} (BSM) model (and respective infinitesimal generator) since it is not clear how to generalize the problem to different diffusions and keep the resulting FBP two dimensional (see also \cite{kamenov2008bachelier} for Bachelier model). However, for other financial (and non-financial) applications, where the FBP can be formulated using a general operator \eqref{eq_parabolIntro}, our method can be applied as well. This is, for example, the case of American option where the underlying asset follows a time-homogeneous diffusion process.

The paper is structured as follows. In Section \ref{sec_FBP}, we state the FBP. In Section \ref{sec_TransmOp}, we introduce the transmutation operators and highlight some of the relevant theoretical results. In Section \ref{sec_TransCSS}, we introduce the notion of the CSS and see how it
can be used to approximate the solutions of the PDE \eqref{eq_parabolIntro}. We also show how to explicitly construct the transmuted CSS for the case of the generalized trigonometric series. In Section \ref{sec_MinProblem}, we state the minimization problem and summarize an algorithm for the solution. In Section \ref{sec_RO}, we introduce the FHRO and set-up the corresponding FBP. The quantitative results for the FHRO, the discussion of the numerical issues and the comparison with existing in the literature results are presented in Section \ref{sec_NumResults}. Section \ref{sec_Conslusion} presents some concluding remarks.

\section{\label{sec_FBP}The free boundary problem}

Consider the differential expression $\mathbf{C}$ from \eqref{eq_parabolIntro}
where the functions $p$, $q$ and $w$ satisfy the following assumption.

\begin{assumption}\label{Assum_pqw} The functions $p$, $p^{\prime}$, $q$, $w$ and $w^{\prime}$ are real valued and continuous on $[0,L]$. Additionally, it is assumed that $p^{\prime}$ and $w^{\prime}$ are absolutely continuous and that $p>0$ and $w>0$.
\end{assumption}

Every non-negative function $s\in C^{1}\left[ 0,T\right] $, such that $s\left( 0\right) =0$ and $0<s(t)\le L$, $t\in (0,T]$, defines a domain
\begin{equation}
D(s) =\{(y,t)\in\mathbb{R}^{2}:0<y<s(t),0<t<T\} ,
\label{D(s)_1}
\end{equation}
as shown on Figure \ref{fig_FBP}.

\begin{problem}
\label{Problem_FBP_1}Find functions $u(y,t)$ and $s(t)$ such that%
\begin{align}
\mathbf{C}u(y,t) &=u_{t}(y,t) ,\qquad (y,t) \in D(s) ,  \label{eq_FBP1_1}\\
\gamma _{11}(t)u(0,t) +\gamma _{12}(t)u_{y}(0,t) &=g_{1}(t) ,\qquad t\in (0,T) ,  \label{eq_FBP_b2_1} \\
u(s(t),t) &=g_{2}(t) ,\qquad t\in (0,T) ,  \label{eq_FBP_b3_1} \\
u_{y}(s(t),t) &=g_{3}(t) ,\qquad t\in (0,T) ,  \label{eq_FBP_b4_1}
\end{align}%
where $\gamma _{1j}$ for $j\in\{1,2\} $ and $g_{k}$ for $k\in\{1,2,3\}$ are analytic functions.
\end{problem}

The aim of this paper is to illustrate the application of the numerical method based on the transmutation operators theory to Problem \ref{Problem_FBP_1}. To avoid the questions of the existence and uniqueness of solution specific to each problem, we will make the following assumption.

\begin{assumption}\label{Assump_UniqueSol}
In suitable classes of functions, there exists a unique solution to Problem \ref{Problem_FBP_1}.
\end{assumption}

The basic idea of a numerical method based on a CSS is that any linear combination of the functions from the CSS is already a solution to \eqref{eq_FBP1_1}. Hence one may construct the linear combination that will satisfy (approximately) the boundary conditions of Problem \ref{Problem_FBP_1}. As was mentioned in the introduction, for many practical problems the boundary conditions are inconsistent resulting that the uniform norm is not a choice for measuring the quality of an approximate solution, and some kind of $L_2$ norm is more convenient. For this reason we will make the following assumption guaranteeing the proposed numerical method to work.

\begin{assumption}\label{Continuous depSol}
The solution to Problem \ref{Problem_FBP_1} continuously depends on the boundary data in a suitable $L_2$ norm.
\end{assumption}

\begin{remark}
This problem includes as a special case the classical degenerate one dimensional Stefan problem. For these types of problems the dependence of the functions $g_{2}$ and $g_{3}$ on the function $s$ and its derivatives can be specified---see \cite{rose1960method} for example. For our method this does not represent additional difficulty. The definition of Problem \ref{Problem_FBP_1} may also include additional conditions that can be necessary to guarantee the existence and the uniqueness of solution. We will see this in the example for the FHRO constructed further.
\end{remark}

\section{\label{sec_TransmOp}Transmutation operators}

In this section we present our main operational tool: the transmutation
operator.

\begin{definition}\label{Def Transmutation}
Let $E_{1}$ and $F_{1}$ be linear subspaces of the linear topological spaces $E$ and $F$, respectively. Consider the pair of operators $\mathbf{A}:E_{1}\rightarrow E$ and $\mathbf{B}:F_{1}\rightarrow F$. A linear invertible operator $\mathbf{T}:F\to E$ defined on the whole $F$ is called a \textbf{transmutation operator} for the pair of operators $\mathbf{A}$ and $\mathbf{B}$ if the following conditions are met:

\begin{enumerate}
\item The operator $\mathbf{T}$ is continuous in $F$, its inverse $\mathbf{T}^{-1}$ is continuous in $E$;

\item $T(F_{1})\subset E_{1}$;

\item The following operator equality is valid%
\begin{equation*}
\mathbf{AT}=\mathbf{TB},
\end{equation*}
or which is the same
\begin{equation*}
\mathbf{A}=\mathbf{TBT}^{-1}.
\end{equation*}
\end{enumerate}
\end{definition}

We are particularly interested in the case of $\mathbf{A}$ being the differential operator $\mathbf{C}$ defined in \eqref{eq_parabolIntro} and $%
\mathbf{B}$ being the second derivative. The idea is to transmute the solutions of the heat equation \eqref{eq_heatEq} into the solutions of the parabolic equation \eqref{eq_FBP1_1}.\footnote{As an illustration, let $h(x,t)$ be a solution of \eqref{eq_heatEq}, then if the operator $\mathbf{T}$ exists, $u=\mathbf{T}h$ will be the solution to equation \eqref{eq_FBP1_1}, indeed $\mathbf{C}u-u_{t}=\mathbf{CT} h-\partial _{t}\mathbf{T}h=\mathbf{T(}\partial _{yy}h-\partial _{t}h)=0$.} Throughout this section we consider equation \eqref{eq_parabolIntro} to be defined for $y\in[A,B]$, and Assumption \ref{Assum_pqw} to hold on the segment $[A,B]$.

In the work of \cite{kravchenko2016liouville} and  \cite{KravchenkoTorba_Neumann_2018}
using the Liouville transformation
\begin{equation*}
x=l(y):=\int_{A_0}^{y}(w(s)/p(s))^{1/2}ds, \qquad y\in \lbrack A,B],
\end{equation*}
where the point $A_0$ is chosen such that
\[
\int_{A}^{A_0}(w(s)/p(s))^{1/2}ds = \int_{A_0}^{B}(w(s)/p(s))^{1/2}ds =: b,
\]
the transmutation operator for the operators $\mathbf{C}$ and $%
\partial _{xx}$ was studied, for the spaces $E_{1}=C^{2}[A,B]$, $E=C[A,B]$, $
F_{1}=C^{2}[-b,b]$ and $F=C[-b,b]$.

\begin{remark}
Equation \eqref{eq_FBP1_1} is a separable PDE, which implies that we only have to construct a one-dimensional transmutation operator for the operator $ \mathbf{C}$.
\end{remark}

The transmutation operator $\mathbf{T}$ is known in closed form only for few equations \eqref{eq_FBP1_1}. However, as we will show for the construction of the CSS, the knowledge of the operator $\mathbf{T}$ itself is not indispensable. This construction is based on the fundamental result Theorem \ref{Teor_Phi_x} that connects the images of the transmutation operator to the family of the recursive integrals, that are called \textbf{formal powers}, see Definition \ref{Def_Phik} below.

Let us define an auxiliary function
\begin{equation*}
\rho \left( y\right) =\left[ p\left( y\right) w\left( y\right) \right]^{1/4},
\end{equation*}
and let $f$ be a non-vanishing (in general, complex-valued) solution of the equation
\begin{equation}
\left( p\left( y\right) f^{\prime }\left( y\right) \right) ^{\prime
}-q\left( y\right) f\left( y\right) =0,\qquad y\in \left[A,B\right] ,
\label{eq_PQR_1}
\end{equation}%
with an initial condition set as
\begin{equation}
f\left( A_0\right) =\frac{1}{\rho \left( A_0\right) }.  \label{eq_PQR_2}
\end{equation}
Since $p$ and $q$ satisfy Assumption \ref{Assum_pqw}, equation \eqref{eq_PQR_1} has two linearly independent regular solutions $f_{1}$ and $f_{2}$ whose zeros alternate. We may construct a non-vanishing solution as $f=f_{1}+if_{2}$ ---\citet[Remark 5]{KravchenkoPorter10}
\begin{definition}
\label{Def_Phik}Let $p$, $q$, $w$ satisfy Assumption \ref{Assum_pqw} and let $f$ be a non-vanishing solution of equation \eqref{eq_PQR_1} that satisfies
condition \eqref{eq_PQR_2}. Then, the associated formal powers are defined, for $k=0,1,2,...$, as
\begin{equation*}
\Phi_{k}(y) =
\begin{cases}
f\left( y\right) Y^{\left( k\right) }\left( y\right) , & k\text{ odd} \\
f\left( y\right) \tilde{Y}^{\left( k\right) }\left( y\right) , & k\text{
even}
\end{cases},
 \qquad \Psi_{k}\left( y\right) =
\begin{cases}
\frac{1}{f\left( y\right) }Y^{\left( k\right) }\left( y\right) , & k\text{
even} \\
\frac{1}{f\left( y\right) }\tilde{Y}^{\left( k\right) }\left( y\right) ,
& k\text{ odd}%
\end{cases}
,
\end{equation*}
where two families of the auxiliary functions are defined as%
\begin{align*}
Y^{\left( 0\right) }\left( y\right) & \equiv\tilde{Y}^{\left( 0\right)
}\left( y\right) \equiv1, \\
\displaybreak[2]
Y^{\left( k\right) }\left( y\right) & =
\begin{cases}
k\int_{A_0}^{y}Y^{\left( k-1\right) }\left( s\right) \frac{1}{f^{2}\left(
s\right) p\left( s\right) }ds,   & k\text{ odd} \\
k\int_{A_0}^{y}Y^{\left( k-1\right) }\left( s\right) f^{2}\left( s\right)
p\left( s\right) ds, &   k\text{ even}%
\end{cases}
, \\
\displaybreak[2]
\tilde{Y}^{\left( k\right) }\left( y\right) & =
\begin{cases}
k\int_{A_0}^{y}\tilde{Y}^{\left( k-1\right) }\left( s\right) f^{2}\left(
s\right) p\left( s\right) ds,  & k\text{ odd} \\
\displaybreak[2]
k\int_{A_0}^{y}\tilde{Y}^{\left( k-1\right) }\left( s\right) \frac{1}{%
f^{2}\left( s\right) p\left( s\right) }ds, & k\text{ even}%
\end{cases}
.
\end{align*}
\end{definition}

\begin{theorem}[\cite{kravchenko2016liouville}]
\label{Teor_Phi_x} Let $p$, $q$ and $w$
satisfy Assumption \ref{Assum_pqw} for all $y\in \lbrack A,B]$ and let $%
f$ be a non-vanishing solution of equation \eqref{eq_PQR_1} that satisfies
condition \eqref{eq_PQR_2}, then there exists a unique complex valued function $K$ and the transmutation operator $\mathbf{T}$ defined as
\begin{equation}
\mathbf{T}h(y)=\frac{h(l(y))}{\rho (y)}+\int_{-l(y)}^{l(y)}K(y,t)h(t)dt, \label{TransmutProperty}
\end{equation}
for $h\in C[-b,b]$, and satisfying the equality
\begin{equation*}
\mathbf{CT}h=\mathbf{T}\partial _{xx}h,
\end{equation*}
for any $h\in C^{2}[-b,b]$ such that
\begin{equation*}
\mathbf{T}[1]=f(y).
\end{equation*}
Moreover, for any $n\in N\cup \left\{ 0\right\} $%
\begin{equation}
\mathbf{T}\left[ x^{n}\right] =\Phi _{n}\left( y\right)  \label{eq_Txn}
\end{equation}
and for $u=\mathbf{T}h$ the following boundary conditions are satisfied
\begin{align}
u(A_{0}) &=\frac{h(0)}{\rho(A_{0})}   \label{eq_Txn_BC1} \\
u'(A_{0}) &=h(0)f'(A_{0})+h'(0)\dfrac{1}{\rho(A_{0})}\sqrt{\dfrac{w(A_{0})}{p(A_{0})}}. \label{eq_Txn_BC2}
\end{align}

\end{theorem}

The theorem provides tools for computation of the transmuted powers. It was used directly in \cite{KrKrTorba2017} for the application
of the Transmuted heat polynomials (THP)\ method to the Stefan-like problem. In this paper, we will use a different CSS.

\begin{remark}\label{Rem_HalfInterval}
This transmutation operator $\mathbf{T}$ has the following important property. Consider a function $u=\mathbf{T}v$. Then the values $u(y)$ for $y\in [A_0,B]$ are completely determined by the function $v$ and the values of $p$, $q$, $w$ on the segment $[A_0,B]$ and are independent of the values of $p$, $q$, $w$ on $[A,A_0)$. For this reason we may consider the restriction of equation \eqref{eq_FBP1_1} onto $[A_0,B]$ and the operator $\mathbf{T}$ as the operator mapping functions from $C[-b,b]$ to functions from $C[A_0,B]$. Such operator is no longer invertible, however it is continuous and maps a solution of the heat equation into a solution of \eqref{eq_FBP1_1} and is sufficient to present the proposed numerical method. Moreover, it allows one to take into account the boundary conditions \eqref{eq_Txn_BC1} and \eqref{eq_Txn_BC2}. For that reason from now on we assume that $A_0=A$ in the Liouville transformation, and when we need the invertibility of $\mathbf{T}$, we continue the coefficients $p$, $q$, $w$ to the left arbitrarily asking only that Assumption \ref{Assum_pqw} be fulfilled. Moreover, in the rest of the present paper we consider $A_0=0$.
\end{remark}

\section{\label{sec_TransCSS}Transmutation of the complete systems of
solutions}
Let $D=\{(y,t):y_{1}(t)<y<y_{2}(t),t\in (0,T]\}$, where $0\le y_{i}(t)\le L$,  $i\in\{1,2\}$, are continuous functions, be a subset of $\mathbb{R}^{2}$.
\begin{definition}\label{Def_CSS_global}
The set of solutions $\{u_{n}\}_{n\in \mathbb{N}}$ of equation \eqref{eq_FBP1_1}
is said to be a \textbf{complete system of solutions in the closed region $\bar D$} if for any $u\in
C(\bar{D})\cap C^{2,1}(D)$, a solution to \eqref{eq_FBP1_1}, and for
any $\varepsilon >0$ there exist an integer $N=N(\varepsilon )$ and constants $%
a_{0},...,a_{N}$ such that
\begin{equation*}
\max_{(y,t)\in \bar{D}}\left\vert
u(y,t)-u^{N}(y,t)\right\vert <\varepsilon,
\end{equation*}
where
\begin{equation}\label{eq_uN_H}
u^{N}(y,t) = \sum\limits_{n=0}^{N}a_{n}u_{n}(y,t).
\end{equation}
\end{definition}

The completeness of a system of functions in the sense of Definition \ref{Def_CSS_global} may be difficult to establish, and the following weaker form of the definition may be sufficient for practical applications.

\begin{definition}\label{Def_CSS_compacts}
The set of solutions $\{u_{n}\}_{n\in \mathbb{N}}$ of equation \eqref{eq_FBP1_1} is said to be a \textbf{complete system of solutions} if for any $u\in
C^{2,1}(D)$, a solution to \eqref{eq_FBP1_1}, for any compact subset $K\subset D$ and for any $\varepsilon >0$ there exist an integer $N = N(\varepsilon, K)$ and constants $a_{0},...,a_{N}$ such that
\begin{equation*}
\max_{(y,t)\in K}\left\vert
u(y,t)-u^{N}(y,t)\right\vert <\varepsilon.
\end{equation*}

\end{definition}

The following proposition allows us, on the basis of the CSS for the heat equation, to construct the CSS for equation \eqref{eq_FBP1_1}. We define
\[
b= \int_{0}^{L}(w(s)/p(s))^{1/2}ds.
\]

\begin{proposition}
\label{Prop Complete system_1} Let $\{v_{n}\}_{n\in\mathbb{N}}$ be a CSS for the heat equation on a rectangle $[-b,b]\times [\delta,T]$ for all sufficiently small $\delta>0$. Consider the system of the transmuted functions $\{u_{n}\}_{n\in \mathbb{N}}$, i.e.
\begin{equation}\label{Transmuted CSS}
u_{n}=\mathbf{T}[v_{n}],
\end{equation}%
where $\mathbf{T}$ is defined in Theorem \ref{Teor_Phi_x} (see Remark \ref{Rem_HalfInterval}).
Then the system $\{u_{n}\}_{n\in \mathbb{N}}$ is a CSS for equation \eqref{eq_FBP1_1} in $D$.
\end{proposition}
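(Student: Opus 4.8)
The plan is to transfer the approximation property from the heat equation \eqref{eq_heatEq} to equation \eqref{eq_FBP1_1} through the transmutation operator $\mathbf{T}$, exploiting three facts: that $\mathbf{T}$ intertwines the two operators (so it carries solutions to solutions), that it is continuous, and that on the enlarged interval it is invertible with continuous inverse. Concretely, each $u_{n}=\mathbf{T}[v_{n}]$ already solves \eqref{eq_FBP1_1} by the intertwining identity $\mathbf{CT}=\mathbf{T}\partial_{xx}$ of Theorem \ref{Teor_Phi_x} (applied slice-wise in $t$, as in the footnote to Section \ref{sec_TransmOp}), and hence so does every finite combination $u^{N}=\sum_{n=0}^{N}a_{n}u_{n}=\mathbf{T}\bigl[\sum_{n=0}^{N}a_{n}v_{n}\bigr]$; these are therefore admissible competitors in Definition \ref{Def_CSS_compacts}. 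The whole argument then reduces to: given a solution $u$ of \eqref{eq_FBP1_1}, produce a heat solution $h$ with $u=\mathbf{T}[h]$, approximate $h$ by the heat CSS, and push the approximation forward by continuity of $\mathbf{T}$.

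First I would set up the invertible version of $\mathbf{T}$. Following Remark \ref{Rem_HalfInterval}, I extend the coefficients $p,q,w$ to the left of $0$ onto an interval $[A,0]$, preserving Assumption \ref{Assum_pqw}, so that the Liouville transform $l$ becomes a bijection of $[A,L]$ onto $[-b,b]$ and $\mathbf{T}\colon C[-b,b]\to C[A,L]$ is invertible with continuous inverse. The core of the reduction is to represent the given solution $u\in C^{2,1}(D)$ as $u=\mathbf{T}[h]$ for some solution $h$ of \eqref{eq_heatEq}; formally $h(\cdot,t)=\mathbf{T}^{-1}[u(\cdot,t)]$, time-slice by time-slice. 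Since the kernel of $\mathbf{T}$ is independent of $t$, differentiation in $t$ commutes with $\mathbf{T}^{-1}$, and the intertwining identity then gives $h_{t}=\mathbf{T}^{-1}u_{t}=\mathbf{T}^{-1}\mathbf{C}u=\partial_{xx}\mathbf{T}^{-1}u=h_{xx}$, so that $h$ solves the heat equation on the region of $[-b,b]\times(0,T]$ corresponding to $D$.

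It remains to approximate and transport. Fix a compact $K\subset D$ and $\varepsilon>0$. Because $K$ is bounded away from $t=0$, its image under $\mathbf{T}^{-1}$ lies in a compact subset of $[-b,b]\times[\delta,T]$ for some admissible $\delta>0$, on which $h$ is a bona fide heat solution. Applying the hypothesis that $\{v_{n}\}$ is a CSS for \eqref{eq_heatEq} there, I choose $N$ and $a_{0},\dots,a_{N}$ so that $h$ is approximated in the maximum norm to within $\varepsilon/\|\mathbf{T}\|$, where $\|\mathbf{T}\|$ is the operator norm of $\mathbf{T}$ with respect to the relevant sup-norms (finite because $\rho$ is bounded below and the kernel $K(y,t)$ is continuous). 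Since $u-u^{N}=\mathbf{T}\bigl[h-\sum_{n=0}^{N}a_{n}v_{n}\bigr]$, continuity of $\mathbf{T}$ yields $\max_{K}|u-u^{N}|\le\|\mathbf{T}\|\cdot(\varepsilon/\|\mathbf{T}\|)=\varepsilon$, which is exactly the estimate demanded by Definition \ref{Def_CSS_compacts}.

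The step I expect to be the main obstacle is the pull-back, i.e. showing that an arbitrary solution $u$ of \eqref{eq_FBP1_1} on the one-sided, possibly curved domain $D$ is genuinely the $\mathbf{T}$-image of a heat solution defined on a sufficiently large region of $[-b,b]\times[\delta,T]$. This hinges on (i) extending the coefficients to regain invertibility of $\mathbf{T}$, (ii) continuing $u$ spatially so that $\mathbf{T}^{-1}$ applies on the full interval $[A,L]$ rather than only on $(y_{1}(t),y_{2}(t))$, and (iii) justifying the commutation of $\partial_{t}$ with the integral operator $\mathbf{T}^{-1}$ together with the pointwise use of the intertwining identity, which requires enough regularity of $u$ and of the kernel $K(y,t)$. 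The degeneracy at $t=0$ is handled precisely by the hypothesis that $\{v_{n}\}$ is complete on $[-b,b]\times[\delta,T]$ for all small $\delta>0$, which lets me restrict attention to compacts $K$ away from the initial line.
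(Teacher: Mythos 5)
Your overall strategy coincides with the paper's: reduce to the heat equation via $\mathbf{T}^{-1}$, approximate there by the hypothesis on $\{v_n\}$, and push the estimate forward using continuity of $\mathbf{T}$. The forward direction (each $u^N=\mathbf{T}[\sum a_n v_n]$ solves \eqref{eq_FBP1_1}, and $\max_K|u-u^N|\le\|\mathbf{T}\|\cdot\varepsilon/\|\mathbf{T}\|$) is fine and is exactly what the paper does.

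However, there is a genuine gap at precisely the point you flag as ``the main obstacle'' and then leave unresolved: the slice-wise formula $h(\cdot,t)=\mathbf{T}^{-1}[u(\cdot,t)]$ is not even well defined, because $u(\cdot,t)$ is only given on the interval $(y_1(t),y_2(t))$, not on the whole spatial interval on which $\mathbf{T}^{-1}$ acts. One must first extend $u$ \emph{as a solution of the parabolic equation} from the curved, one-sided domain $D$ to a full rectangle; only then can $\mathbf{T}^{-1}$ be applied and the intertwining identity used to conclude that the result solves the heat equation on all of $[-b,b]\times[\delta,T]$, which is where the completeness hypothesis on $\{v_n\}$ lives. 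This extension is the only non-routine ingredient of the proof, and it is not a matter of regularity bookkeeping: it requires an analytic-continuation theorem for parabolic equations. The paper handles it by passing to the Liouville-transformed equation, enclosing the compact $K$ in a subdomain $D(s_1,s_2)$ bounded by \emph{analytic, separated} curves $s_1(t)<s_2(t)$ contained in $[0,b]\times[\delta,T]$, and then invoking Theorems 2.3.2 and 2.3.3 of \cite{colton1976solution} to extend the solution to the rectangle $[-b,b]\times[\delta,T]$. (The need for separated analytic boundaries is also why the statement is formulated on $[\delta,T]$ for small $\delta$ rather than down to $t=0$.) Your proposal correctly identifies items (i) and (iii) — coefficient continuation and commuting $\partial_t$ with the integral operator — which are routine, but item (ii) is the heart of the argument and cannot be dispatched by ``continuing $u$ spatially'' at each fixed $t$: an arbitrary spatial continuation would not produce a solution of the PDE, and hence $\mathbf{T}^{-1}$ of it would not solve the heat equation. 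Until that extension step is supplied (by Colton's reflection/continuation results or an equivalent), the proof is incomplete.
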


\begin{proof}
Consider a continuation of the coefficients $p$, $q$, $w$ onto $[-L_1, L]$ such that the Liouville transformation satisfies $l(-L_1) = l(L)=b$ and Assumption \ref{Assum_pqw} holds on $[-L_1,L]$.

Let $u(y,t)\in C^{2,1}(D)$ be a real valued solution to \eqref{eq_FBP1_1}, $K\subset D$ a compact subset and $\varepsilon>0$. Consider the preimage $u_l=l^{-1}(u)$ of the solution $u$ under the Liouville transformation. Let $K_l = l^{-1}(K)$.
Then there exist a constant $\delta>0$ and functions $s_1(t)$ and $s_2(t)$, analytic on a disk in the complex plane containing the segment $[\delta, T]$ such that the domain $D(s_1,s_2) = \{
(x,t): s_1(t)\le x\le s_2(t), \, t\in[\delta, T]\}$ satisfies
\[
K_l\subset D(s_1,s_2) \subset [0,b]\times[0,T].
\]
The solution $u_l$ is a classical solution  of the Liouville transformed parabolic equation in $D(s_1,s_2)$, continuous in $\bar D(s_1, s_2)$. Similarly to the proofs of Theorem 2.3.2 and 2.3.3 from \cite{colton1976solution} $u_l$ can be extended to the solution of the same equation on the rectangle $[-b,b]\times[\delta,T]$, and its Liouville transformation (which we denote by $\tilde u$) is then a solution of \eqref{eq_FBP1_1} on $[-L_1,L]\times [\delta, T]$.

Consider $v=\mathbf{T}^{-1}\tilde u$. Then $v$ is a solution of the heat equation on $[-b,b]\times[\delta,T]$. Since the system $\{v_{n}\}_{n\in
\mathbb{N}}$ is a CSS for the heat equation on the region $[-b,b]\times[\delta,T]$, there exist a constant $N$ and such constants $a_0,\ldots,a_N$ that
\[
\max_{(x,t)\in [-b,b]\times[\delta,T]}\left\vert
v(x,t)-\sum_{n=0}^{N}a_{n}v_{n}(x,t)\right\vert <\frac{\varepsilon}{\|\mathbf{T}\|}.
\]
Hence
\[
\begin{split}
\max_{(y,t)\in [-L_1,L]\times[\delta,T]}\left\vert
\tilde u(y,t)-\sum_{n=0}^{N}a_{n}u_{n}(y,t)\right\vert
& =\max_{(y,t)\in [-L_1,L]\times[\delta,T]}\left\vert
\mathbf{T}v(y,t)-\sum_{n=0}^{N}a_{n}\mathbf{T}v_{n}(y,t)\right\vert\\
&<\frac{\varepsilon}{\|\mathbf{T}\|}\cdot \|\mathbf{T}\| = \varepsilon.
\end{split}
\]
Now the proof follows observing that $K\subset [-L_1,L]\times[\delta,T]$.
\end{proof}

\begin{remark}
Note that the transmuted CSS defined by \eqref{Transmuted CSS} does not depend on a continuation of the coefficients $p$, $q$ and $w$
\end{remark}

\begin{remark}
The technique developed in \cite{colton1976solution} and used in the proof of Proposition \ref{Prop Complete system_1} requires the boundaries $y_{1,2}$ of the region to be separated, i.e., $y_1(t)<y_2(t)$, $t\in [0,T]$ and thus does not allow us to work directly with an approximation to the solution of the original problem in which $y_1(0)=y_2(0)$. For that reason we have to consider the intervals $[\delta, T]$.
\end{remark}

The idea to use the transmutation operator to transmute the CSS for the construction of the solutions was studied in the monographs \cite{colton1976solution, colton1980analytic}. At the time, the representation \eqref{eq_Txn} for the transmuted powers and the
representations of the next section were unknown, which limited the practical application of Colton's theory.

\subsection{Transmutation of the exponential CSS}

In \cite{KravchenkoNavarroTorba2017} a representation for the solutions to equation
\begin{equation*}
\mathbf{C}u=\omega ^{2}u,
\end{equation*}
was obtained in terms of Neumann series of Bessel functions. This representation can
be used to construct a CSS for equation \eqref{eq_FBP1_1}. Consider the set of
functions $\{e_{n}^{\pm }\}_{n\in \mathbb{N}}$ defined in \eqref{eq_en_trig}
where $\omega _{n}$ are chosen such that the limit \eqref{eq_cond_w_conv}
exists. The next proposition guarantees that it is in fact the CSS.

Let $D = \{(x,t): s_1(t)<x<s_2(t),\, 0<t<t_0\}$, where $s_1$ and $s_2$ are analytic functions of $t$ for $0\le t\le t_0$ and $s_1(t)<s_2(t)$ for $0\le t\le t_0$.

\begin{proposition}[{\citet[Cor. 5.4]{colton1980analytic}}]
\label{Prop_En} Let $h\in C^{2,1}\left(
D\right) \cap C(\bar{D})$ be a solution to the heat equation \eqref{eq_heatEq} in $D$. Then there exists an integer $N$ and constants $a_{0}^{\pm
},...,a_{N}^{\pm }$ such that
\begin{equation*}
\max_{\bar{D}}\left\vert h(x,t)-\sum\limits_{n=0}^{N}a_{n}^{\pm }e_{n}^{\pm
}(x,t)\right\vert <\varepsilon .
\end{equation*}
\end{proposition}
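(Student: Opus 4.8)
The statement to prove is Proposition~\ref{Prop_En}, a completeness result for the exponential system $\{e_n^\pm\}$ of the heat equation on a region $D$ bounded by two separated analytic curves. The plan is to reduce the approximation problem to a classical question about complex analytic functions, exploiting the special structure of solutions of the heat equation.

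\emph{The approach.} First I would recall the key analyticity property of solutions of the heat equation: any $h\in C^{2,1}(D)\cap C(\bar D)$ solving \eqref{eq_heatEq} extends to a function that is analytic in the spatial variable $x$ (holomorphic in a complex neighborhood) and, jointly, is real-analytic; moreover $h$ admits an integral representation in terms of its boundary (Cauchy / heat-kernel) data. The functions $e_n^\pm(x,t)=\exp(\pm i\omega_n x-\omega_n^2 t)$ are themselves entire in $x$ for each fixed $t$. So the natural strategy is to pass from the two real variables $(x,t)$ to a one-complex-variable problem by using that $t$ enters only through the combination that separates in these exponentials.

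\emph{Key steps in order.} The main idea I would carry out is to associate to the solution $h$, via its analytic extension, a holomorphic function of a single complex variable $z$ (essentially $z=x+i\tau$ after an appropriate analytic change of variables / Appell-type transform that turns heat solutions into harmonic or holomorphic data), so that the exponentials $e_n^\pm$ correspond to the monomials or exponentials $e^{\pm i\omega_n z}$ in that variable. Step one: set up this correspondence precisely and verify that uniform approximation of $h$ on $\bar D$ is equivalent to uniform approximation of the associated holomorphic function on a corresponding compact set $\hat D$ in the $z$-plane. Step two: invoke a completeness theorem for generalized exponentials $\{e^{\pm i\omega_n z}\}$ — this is exactly where condition \eqref{eq_cond_w_conv}, namely $\lim_{n\to\infty} n/\omega_n^2 = d>0$, enters, since a Müntz/Szász-type density criterion (a condition on the exponents $\omega_n$, here a Blaschke-type or density condition guaranteeing that the exponential sums are dense in the relevant space of holomorphic functions) is what makes the system complete. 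Step three: transfer the approximating exponential sum back through the correspondence to obtain the sum $\sum a_n^\pm e_n^\pm$ approximating $h$ uniformly on $\bar D$, controlling the error by the continuity of the transform.

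\emph{Main obstacle.} The hard part will be making the passage to the single-variable holomorphic setting rigorous, in particular ensuring that the analytic extension of $h$ exists on a domain large enough to contain the image $\hat D$ of $\bar D$, and that the density/completeness criterion for $\{e^{\pm i\omega_n z}\}$ applies on precisely that domain. The geometry of $\hat D$ depends on the separated analytic boundaries $s_1(t)<s_2(t)$, and one must check that these hypotheses yield a compact set to which the exponential-completeness theorem applies, with the spacing condition \eqref{eq_cond_w_conv} supplying the required density of exponents. Since this is Corollary~5.4 of \cite{colton1980analytic}, I would in practice cite the machinery of that monograph (runge-type approximation for parabolic equations together with the density of the $\omega_n$) rather than reconstruct it, and the proof reduces to verifying that the present hypotheses on $D$ and on $\{\omega_n\}$ match those required there.
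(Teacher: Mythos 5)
The paper gives no proof of Proposition \ref{Prop_En}: it is imported verbatim from \citet[Cor.~5.4]{colton1980analytic}, so the intended "proof" is exactly the citation you fall back on at the end of your proposal. Your sketch of what lies behind that citation (analytic continuation of heat solutions across the analytic boundaries to a rectangle, reduction via Colton's integral operators to approximation by $\{e^{\pm i\omega_n z}\}$ in a space of analytic functions of one complex variable, and a M\"untz--Sz\'asz-type density argument in which \eqref{eq_cond_w_conv} supplies $\omega_n\sim\sqrt{n/d}$ and hence $\sum_n \omega_n^{-1}=\infty$) is a reasonable outline of the machinery in that monograph and is consistent with how the proposition is used here.
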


Since under the change of the variable $t\mapsto t+\delta$ each function $e_n^{\pm}$ remains the same up to a multiplicative constant, the system $\{e_{n}^{\pm }\}_{n\in \mathbb{N}}$ is the CSS in the sense required for Proposition \ref{Prop Complete system_1}.

Each of the basis functions $e_{n}$ is a solution to the heat equation \eqref
{eq_heatEq}. We define the transmuted basis functions as follows
\begin{equation*}
E_{n}^{\pm }\left( y,t\right) =\mathbf{T}[e_{n}^{\pm }\left( x,t\right)
]=e^{-\omega _{n}^{2}t}\mathbf{T}[e^{\pm i\omega _{n}x}].
\end{equation*}
Application of Theorem \ref{Teor_Phi_x} guarantees us that they are
solutions to equation \eqref{eq_FBP1_1}, i.e. $(\mathbf{C}-\partial
_{t})E_{n}^{\pm }=(\mathbf{CT}e_{n}^{\pm }-\partial _{t}\mathbf{T}e_{n}^{\pm
})=\mathbf{T}(\partial _{xx}-\partial _{t})e_{n}^{\pm }=0$ and the
application of Proposition \ref{Prop Complete system_1} guarantees that they form a CSS
for equation \eqref{eq_FBP1_1} on any compact contained in $[0,L]\times (0,T]$.

For the construction of functions $E_{n}^{\pm }$ we can use the explicit form
of the transmuted solutions $\mathbf{T}[\cos (\omega x)]$ and $\mathbf{T}%
[\sin (\omega x)]$, since
\begin{equation*}
\mathbf{T}[e^{\pm i\omega _{n}x}]=\mathbf{T}[\cos (\omega _{n}x)]\pm i%
\mathbf{T}[\sin (\omega _{n}x)],
\end{equation*}
presented in \cite{KravchenkoTorba_Neumann_2018}.

\subsection{Representation of the transmuted Sine and Cosine}

Two linearly independent solutions of equation
\begin{equation}
\mathbf{C}u=\omega ^{2}u  \label{eq_Cu_wu}
\end{equation}
can be obtained as images of $\cos\omega x$ and $\sin\omega x$, linearly independent solutions of the equation $z''=\omega^2 z$, under the action of the transmutation operator $\mathbf{T}$, and
will be denoted by
\begin{equation}
c( \omega ,y) =\mathbf{T}[\cos (\omega x)],\qquad \text{with }\quad
c( \omega ,0) =1/\rho (0)\quad\text{ and }\quad c^{\prime }\left( \omega
,0\right) =\tilde h,  \label{eq_c_def}
\end{equation}
and
\begin{equation}
s( \omega ,y)  =\mathbf{T}[\sin (\omega x)],\qquad \text{with }\quad
s( \omega ,0) =0\quad\text{ and }\quad s^{\prime }\left( \omega ,0\right) =%
\frac{\omega }{\rho (0)}\sqrt{\frac{w(0)}{p(0)}},  \label{eq_s_def}
\end{equation}
where
\begin{equation*}
\tilde{h}=\sqrt{\frac{\rho \left( 0\right) }{w\left( 0\right) }}\left( \frac{%
f^{\prime }\left( 0\right) }{f\left( 0\right) }+\frac{\rho ^{\prime }\left(
0\right) }{\rho \left( 0\right) }\right)   
\end{equation*}
and $f$ is a solution of \eqref{eq_PQR_1} that satisfies \eqref{eq_PQR_2} and appears in Theorem \ref{Teor_Phi_x}.

\begin{theorem}[{\citet[Theorem 4.1]{KravchenkoTorba_Neumann_2018}}]
\label{Teor_cs_pqr} Let the functions $p$, $q$ and $w$ satisfy the conditions from the Assumption \ref{Assum_pqw} and $f$ be the solution of \eqref{eq_PQR_1} satisfying \eqref{eq_PQR_2} and such that $f\neq 0$ for all $y\in \lbrack 0,L]$. Then two linearly independent solutions $c$ and $s$ of equation \eqref{eq_Cu_wu} for $\omega \neq 0$ can be written in the form
\begin{equation}
c(\omega ,y)=\frac{\cos (\omega l(y))}{\rho (y)}+2\sum_{m=0}^{\infty
}(-1)^{m}\alpha _{2m}(y)j_{2m}(\omega l(y))  \label{eq_cwy}
\end{equation}%
and%
\begin{equation}
s(\omega ,y)=\frac{\sin (\omega l(y))}{\rho (y)}+2\sum_{m=0}^{\infty
}(-1)^{m}\alpha _{2m+1}(y)j_{2m+1}(\omega l(y))  \label{eq_swy},
\end{equation}%
where $j_k$ stands for the spherical Bessel function of order $k$,
\begin{equation*}
l(y):=\int\nolimits_{0}^{y}\left( w(s)/p(s)\right) ^{1/2}ds,
\end{equation*}%
with the coefficients defined by
\begin{equation}
\alpha _{m}(y)=\frac{2n+1}{2}\left( \sum_{k=0}^{m}\frac{l_{k,m}\Phi _{k}(y)}{%
l^{k}(y)}-\frac{1}{\rho (y)}\right) ,  \label{eq_direct_alfa}
\end{equation}
where $\Phi _{k}$ are taken from Definition \ref{Def_Phik},  and $l_{k,m}$ is the coefficient of $x^k$ in the Legendre polynomial of order $m$. The solutions $c$ and $s$ satisfy the initial conditions \eqref{eq_c_def} and \eqref{eq_s_def}. The series in \eqref{eq_cwy} and \eqref{eq_swy} converge uniformly with respect to $y$ on $[0,L]$
and converge uniformly with respect to $\omega $ on any compact subset of
the complex plane of the variable $\omega $. Moreover, for the functions%
\begin{equation*}
c^{M}(\omega ,y)=\frac{\cos (\omega l(y))}{\rho (y)}+2%
\sum_{m=0}^{[M/2]}(-1)^{m}\alpha _{2m}(y)j_{2m}(\omega l(y))
\end{equation*}
and
\begin{equation*}
s^{M}(\omega ,y)=\frac{\sin (\omega l(y))}{\rho (y)}+2%
\sum_{m=0}^{[(M-1)/2]}(-1)^{m}\alpha _{2m+1}(y)j_{2m+1}(\omega l(y))
\end{equation*}%
the following estimates hold
\begin{align*}
\left\vert c(\omega ,y)-c^{M}(\omega ,y)\right\vert & \leq \sqrt{2l(y)}%
\varepsilon _{M}(l(y))\max_{y\in \lbrack 0,L]}\frac{1}{\left\vert \rho
(y)\right\vert }, \\
\left\vert s(\omega ,y)-s^{M}(\omega ,y)\right\vert & \leq \sqrt{2l(y)}%
\varepsilon _{M}(l(y))\max_{y\in \lbrack 0,L]}\frac{1}{\left\vert \rho
(y)\right\vert }
\end{align*}%
for any $\omega \in \mathbb{R}$, $\omega \neq 0$, and
\begin{align*}
\left\vert c(\omega ,y)-c^{M}(\omega ,y)\right\vert & \leq \varepsilon
_{M}(l(y))\sqrt{\frac{\sinh (2Cl(y))}{C}}\max_{y\in \lbrack 0,L]}\frac{1}{%
\left\vert \rho (y)\right\vert }, \\
\left\vert s(\omega ,y)-s^{M}(\omega ,y)\right\vert & \leq \varepsilon
_{M}(l(y))\sqrt{\frac{\sinh (2Cl(y))}{C}}\max_{y\in \lbrack 0,L]}\frac{1}{%
\left\vert \rho (y)\right\vert }
\end{align*}%
for any $\omega \in \mathbb{C}$, $\omega \neq 0$ belonging to the strip $ \abs{\operatorname{Im}\omega} \leq C$, $C\geq 0$, where
$\varepsilon _{M}$ is a function satisfying $\varepsilon _{M}\rightarrow 0$,
as $M\rightarrow \infty $,  uniformly with respect to $y\in [0,L]$.
\end{theorem}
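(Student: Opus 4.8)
The plan is to derive \eqref{eq_cwy} and \eqref{eq_swy} from the integral form of the transmutation operator in Theorem \ref{Teor_Phi_x} combined with a Fourier--Legendre expansion of its kernel, and then to read off the coefficients \eqref{eq_direct_alfa} from the action of $\mathbf{T}$ on monomials. First I would write
\[
c(\omega,y)=\mathbf{T}[\cos(\omega x)](y)=\frac{\cos(\omega l(y))}{\rho(y)}+\int_{-l(y)}^{l(y)}K(y,t)\cos(\omega t)\,dt,
\]
and analogously for $s(\omega,y)$ with $\sin(\omega t)$, so that the leading terms of the claimed series are already in place and only the integral terms remain to be expanded. Fixing $y$, I would expand $t\mapsto K(y,t)$ in Legendre polynomials on the symmetric interval $[-l(y),l(y)]$.

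The key computational identity is $\int_{-1}^{1}P_m(\sigma)\cos(z\sigma)\,d\sigma$, which vanishes for odd $m$ by parity and equals $2(-1)^{m/2}j_m(z)$ for even $m$, with the symmetric statement for $\sin$ selecting odd $m$. Substituting $z=\omega l(y)$ and integrating the Legendre series of $K(y,\cdot)$ termwise therefore converts the integral term into the Neumann series of even-order (respectively odd-order) spherical Bessel functions $j_{2m}(\omega l(y))$ appearing in \eqref{eq_cwy} (respectively \eqref{eq_swy}). To identify the coefficients with \eqref{eq_direct_alfa}, I would invoke \eqref{eq_Txn}: applying $\mathbf{T}$ to $x^{k}$ gives
\[
\Phi_k(y)=\frac{l(y)^{k}}{\rho(y)}+\int_{-l(y)}^{l(y)}K(y,t)\,t^{k}\,dt,
\]
so the kernel moments are $\int_{-l(y)}^{l(y)}K(y,t)t^{k}\,dt=\Phi_k(y)-l(y)^{k}/\rho(y)$. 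Writing $P_m$ in the monomial basis with coefficients $l_{k,m}$, substituting these moments, and using $\sum_{k}l_{k,m}=P_m(1)=1$ to absorb the $1/\rho$ contributions collapses the scaled Legendre coefficient into exactly \eqref{eq_direct_alfa}. The normalizations \eqref{eq_c_def}, \eqref{eq_s_def} then follow from the boundary relations \eqref{eq_Txn_BC1}--\eqref{eq_Txn_BC2}, and these in turn force the Wronskian of $c$ and $s$ at $y=0$ to be nonzero, giving linear independence.

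For the convergence statements and the error bounds I would argue by Parseval and Cauchy--Schwarz rather than by estimating terms individually. Writing the tail
\[
c(\omega,y)-c^{M}(\omega,y)=2\sum_{m>[M/2]}(-1)^{m}\alpha_{2m}(y)\,j_{2m}(\omega l(y)),
\]
Cauchy--Schwarz splits it into a factor $\varepsilon_M(l(y))$, the tail of the (weighted) square sum of the Legendre coefficients of $K(y,\cdot)$, and a factor governed by $\sum_{m}|j_{2m}(\omega l(y))|^{2}$. The latter is, up to the Parseval normalization, the quantity $\int_{-l(y)}^{l(y)}|\cos(\omega t)|^{2}\,dt$, which is bounded by $2l(y)$ for real $\omega$ and grows like $\sinh(2Cl(y))/C$ when $\abs{\operatorname{Im}\omega}\le C$; this produces the two families of estimates in the statement together with the factor $\max_{[0,L]}1/\abs{\rho}$. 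Since the kernel $K(y,\cdot)$ is continuous, hence square-integrable, on each $[-l(y),l(y)]$, its Legendre coefficients are square-summable and $\varepsilon_M\to0$ pointwise; uniform convergence in $\omega$ on compacta is then immediate from the explicit $\omega$-dependence of the second factor.

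The step I expect to be the main obstacle is upgrading $\varepsilon_M(l(y))\to0$ to hold \emph{uniformly} in $y\in[0,L]$, as the theorem requires, since pointwise square-summability of the Legendre coefficients carries no uniform rate by itself. This hinges on controlling the joint regularity of $K(y,t)$, which is ultimately limited by the smoothness permitted in Assumption \ref{Assum_pqw} (only $p'$ and $w'$ absolutely continuous); the natural route is to show that $y\mapsto\sum_{m}|\alpha_{m}(y)|^{2}$ is continuous on the compact segment $[0,L]$ and then invoke Dini's theorem for the monotone partial sums. Verifying that the termwise integration of the Legendre series is legitimate under precisely these hypotheses, and that the resulting uniform decay persists up to the boundary $y\to0$ where $l(y)\to0$, is where the genuine analytic work resides.
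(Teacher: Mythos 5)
This theorem is imported verbatim from \citet[Theorem 4.1]{KravchenkoTorba_Neumann_2018}, so the paper itself offers no proof to compare against; your argument, however, reproduces essentially the route taken in that cited source: expand the transmutation kernel $K(y,\cdot)$ in Legendre polynomials on $[-l(y),l(y)]$, use the identity $\int_{-1}^{1}P_m(\sigma)e^{iz\sigma}\,d\sigma=2i^{m}j_m(z)$ to convert the integral term into the Neumann--Bessel series, identify the coefficients through the kernel moments supplied by $\mathbf{T}[x^k]=\Phi_k$ together with $P_m(1)=1$, and obtain the tail bounds by Cauchy--Schwarz with $\lVert\cos(\omega\cdot)\rVert_{L^2(-l,l)}^2\le 2l$ (real $\omega$) or $\le\sinh(2Cl)/C$ (strip), with uniformity in $y$ via continuity of the coefficient functions and Dini's theorem. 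Your identification of the uniform-in-$y$ decay of $\varepsilon_M$ as the genuinely delicate step is accurate, and the proposal is correct as an outline of the standard proof.
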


\begin{remark}\label{Rem Solutions w 0}
For $\omega=0$ the two linearly independent solutions can
be represented as
\begin{align*}
c(0,y)& =\mathbf{T}\left[ 1\right]=f(y) , \\
\tilde{s}(0,y)& =\lim_{\omega \rightarrow 0}\mathbf{T}\left[ \frac{\sin (\omega x)}{%
\omega }\right] =\mathbf{T}\left[ x\right] =\Phi _{1}(y).
\end{align*}%
\end{remark}

We also have the representation for the derivatives of the solutions in \cite[Section 5]{KravchenkoTorba_Neumann_2018},
\begin{align*}
c^{\prime}(\omega,y) & =\sqrt{\frac{w(y)}{p(y)}}\left[ \frac{1}{\rho (y)}%
(G_{1}(y)\cos(\omega l(y))-\omega\sin(\omega
l(y)))+2\sum\limits_{m=0}^{\infty}(-1)^{m}\mu_{2m}(y)j_{2m}(\omega l(y))%
\right] \\
&\quad -\frac{\rho^{\prime}(y)}{\rho(y)}c(\omega,y)
\end{align*}
and%
\begin{align*}
s^{\prime}(\omega,y) & =\sqrt{\frac{w(y)}{p(y)}}\left[ \frac{1}{\rho (y)}%
(G_{2}(y)\sin(\omega l(y))+\omega\cos(\omega
l(y)))+2\sum\limits_{m=0}^{\infty}(-1)^{m}\mu_{2m+1}(y)j_{2m+1}(\omega l(y))%
\right] \\
&\quad -\frac{\rho^{\prime}(y)}{\rho(y)}s(\omega,y),
\end{align*}
where
\begin{equation*}
G_{1}\left( y\right) = G_{2}\left( y\right) + \tilde h, \qquad G_{2}\left( y\right)  =\frac{\rho \rho ^{\prime }}{2w}\bigg|_{0}^{y}+\frac{1}{2}\int_{0}^{y}
\left[ \frac{q}{\rho ^{2}}+\frac{\left( \rho ^{\prime }\right) ^{2}}{w}
\right] \left( s\right) ds,  
\end{equation*}
and
\begin{equation}\label{eq_direct_mu}
\begin{split}
\mu_{m}(y) & :=\frac{2m+1}{2\rho(y)}\biggl[\sum\limits_{k=0}^{m}\frac{l_{k,m}}{%
l^{k}(y)}\left( k\frac{\Psi_{k-1}(y)}{\rho(y)}+\rho(y)\sqrt{\frac {p(y)}{w(y)%
}}\left( \frac{f^{\prime}(y)}{f(y)}+\frac{\rho^{\prime}(y)}{\rho(y)}\right)
\Phi_{k}(y)\right)  \\
&\quad  -\frac{m(m+1)}{2l(y)}-G_{2}(y)-\frac{\tilde h}{2}(1+(-1)^{n})\biggr].
\end{split}
\end{equation}

We can
use Theorem \ref{Teor_cs_pqr} to represent the transmuted base functions and
their derivatives as follows
\begin{align}
E_{n}^{\pm}(y,t) & =\left( c\left( \omega_{n},y\right) \pm is(\omega
_{n},y)\right) e^{-\omega_{n}^{2}t},  \label{eq_En1} \\
\partial_{y}\left( E_{n}^{\pm}(y,t)\right) & =\left( c^{\prime}\left(
\omega_{n},y\right) \pm is^{\prime}(\omega_{n},y)\right)
e^{-\omega_{n}^{2}t},  \label{eq_En2} \\
\partial_{t}\left( E_{n}^{\pm}(y,t)\right) & =-\omega_{n}^{2}\left( c\left(
\omega_{n},y\right) \pm is(\omega_{n},y)\right) e^{-\omega_{n}^{2}t}.
\label{eq_En3}
\end{align}

\subsection{Recurrence formulas}\label{Subsect Recurrent}

The representations \eqref{eq_direct_alfa} and \eqref{eq_direct_mu} are not
practical for efficient computation of a large number of the coefficients
due to the fast growth of the Legendre coefficients $l_{k,m}$ when $m\to\infty$. An alternative, robust
for the computations recurrence formulas, were developed in \cite
{KravchenkoTorba_Neumann_2018}. We introduce
\begin{equation}
A_{n}\left( y\right) =l^{n}\left( y\right) \alpha _{n}\left( y\right) \qquad\text{and}\qquad B_{n}\left( y\right) =l^{n}\left( y\right) \mu _{n}\left(
y\right) ,  \label{NSBF_DBKO_eq10_1}
\end{equation}%
and then the following formulas hold for $n=2,3,...$
\begin{equation}\label{An recurr}
A_{n}\left( y\right) =\frac{2n+1}{2n-3}\left( l^{2}\left( y\right)
A_{n-2}\left( y\right) +\left( 2n-1\right) f\left( y\right) \tilde{\theta}%
_{n}\left( y\right) \right)
\end{equation}
and
\begin{equation}\label{Bn recurr}
\begin{split}
B_{n}(y)& =\frac{2n+1}{2n-3}\Biggl[l^{2}(y)B_{n-2}(y)+2(2n-1)\Bigg(\sqrt{%
\frac{p(y)}{w(y)}}\left( f^{\prime }(y)\rho (y)+f(y)\rho ^{\prime
}(y)\right) \frac{\tilde{\theta}_{n}(y)}{\rho (y)}  \\
&\quad  +\frac{\tilde{\eta}_{n}(y)}{\rho ^{2}(y)f(y)}\Bigg)-(2n-1)l(y)A_{n-2}(y)%
\Biggr],
\end{split}
\end{equation}
where
\begin{equation*}
\tilde{\theta}_{n}\left( y\right) =\int_{0}^{y}\left( \frac{\tilde{\eta}%
_{n}\left( x\right) }{\rho ^{2}\left( x\right) f^{2}\left( x\right) }-\frac{%
l\left( x\right) A_{n-2}\left( x\right) }{f\left( x\right) }\right) \sqrt{%
\frac{w\left( x\right) }{p\left( x\right) }}dx
\end{equation*}
and
\begin{equation*}
\tilde{\eta}_{n}(y)=\int_{0}^{y}\Bigg(l(x)(f^{\prime }(x)\rho (x)+f(x)\rho
^{\prime }(x))+(n-1)\rho (x)f(x)\sqrt{\frac{w(x)}{p(x)}}\Bigg)\rho
(x)A_{n-2}(x)dx.
\end{equation*}
The initial values $A_{0}$, $A_{1}$, $B_{0}$ and $B_{1}$ can be calculated
from
\begin{equation*}
A_0\left( y\right) = \frac{1}{2}\left( f\left( y\right) -\frac{1}{%
\rho \left( y\right) }\right) , \qquad
A_{1}\left( y\right) =\frac{3}{2}\left( \Phi _{1}\left(
y\right) -\frac{l\left( y\right)}{\rho \left( y\right)  }\right),
\end{equation*}
and
\begin{align*}
B_{0}\left( y\right) & =\sqrt{\frac{p\left( y\right) }{w\left( y\right) }}%
\left( f^{\prime }\left( y\right) +\frac{f\left(
y\right)\rho ^{\prime }\left(
y\right) }{\rho \left( y\right) } \right) -\frac{%
G_{1}\left( y\right) }{2\rho \left( y\right) }, \\
B_{1}\left( y\right) & =\frac 3{2} \left[\frac 1{f\left( y\right)\rho^2\left( y\right)}+ \sqrt{\frac{p\left( y\right) }{w\left( y\right) }}\left( \frac{\rho^{\prime }\left( y\right)}{\rho\left( y\right)} + \frac{f^{\prime }\left( y\right)}{f\left( y\right)}\right)\Phi_1\left( y\right)  - \frac{G_{2}\left(
y\right)l\left(
y\right)+1}{\rho\left( y\right)}
\right].
\end{align*}%
For the discussion on the computational details see \cite{KravchenkoTorba_Neumann_2018} and \cite{KravchenkoNavarroTorba2017}.

\subsection{Reduced complete system of solutions}\label{Subsect ReducedSystem}
Let us consider and important particular case when in \eqref{eq_FBP_b2_1} one has
\begin{equation}\label{eq_FBP_b2_constants}
    \gamma_{11}(t)\equiv \alpha,\qquad \gamma_{12}(t)\equiv \beta\ne 0,\qquad\text{and}\qquad g_1(t)\equiv 0,\qquad t\in (0,T),
\end{equation}
where $\alpha$ and $\beta$ are some real constants. We are going to show that the CSS $\{E_n^{\pm}\}$ can be reduced into one that a priori satisfies condition \eqref{eq_FBP_b2_1}. The reduced system is comprised of functions $\tilde E_n$ of the form 
\begin{equation}\label{Etilde n}
    \tilde E_n(y,t) = e^{-\omega_n^2 t} \bigl(c(\omega_n, y) + \beta_n s(\omega_n,y)\bigr),
\end{equation}
where the constants $\beta_n$ are such that condition \eqref{eq_FBP_b2_1} is fulfilled. Note that for $\omega_n\ne 0$ each $\tilde E_n$ is a linear combination of the functions $E_n^+$ and $E_n^-$. Using \eqref{eq_c_def}, \eqref{eq_s_def} and \eqref{eq_FBP_b2_constants}, condition \eqref{eq_FBP_b2_1} reduces to
\begin{equation*}
    \frac{\alpha}{\rho(0)} + \beta \tilde h +\beta\beta_n \frac{\omega_n}{\rho(0)}\sqrt{\frac{w(0)}{p(0)}} = 0,
\end{equation*}
hence
\begin{equation}\label{beta_n def}
    \beta_n = -\frac{1}{\omega_n} \sqrt{\frac{p(0)}{w(0)}}\left(\frac{\alpha}{\beta} + \tilde h \rho(0)\right).
\end{equation}
If one of $\omega_n$ is equal to $0$, say for certainty that $\omega_0=0$, only one function corresponds to it, which need not to satisfy condition \eqref{eq_FBP_b2_1}. Note that both linearly independent solutions corresponding to $\omega_0 =0$ are constructed as part of the procedure described in Subsection \ref{Subsect Recurrent}, see also Remark \ref{Rem Solutions w 0}, so we may take a linear combination of them
\begin{equation}\label{Etilde 0}
\tilde E_0(y,t) = f(y)+\beta_0 \Phi_1(y).
\end{equation}
 Condition \eqref{eq_FBP_b2_1} for $\omega_0 =0$ takes the form
\[
\frac{\alpha}{\rho(0)} + \beta\tilde h + \frac{\beta\beta_0}{\rho_0}\sqrt{\frac{w(0)}{p(0)}}=0,
\]
hence
\begin{equation}\label{beta_0 def}
\beta_0 = -\sqrt{\frac{p(0)}{w(0)}}\left(\frac\alpha\beta + \tilde h \rho(0)\right).
\end{equation}

Let us show that the system $\{\tilde E_n\}$ is a CSS.
Let $D=\{(y,t):0<y<y_2(t),\ t\in(0,T]\}$, where $0<y_2(t)\le L$ is a continuous functions, be a subset of $\mathbb{R}^2$. 

\begin{proposition}\label{Prop Reduced system}
Let the limit condition \eqref{eq_cond_w_conv} holds. Then the system of functions $\{\tilde E_n\}$ is a CSS for solutions of equation \eqref{eq_FBP1_1} in $D$ satisfying condition \eqref{eq_FBP_b2_1}.
\end{proposition}

\begin{proof}
We recall that for even functions $h$ the transmutation operator from Theorem \ref{Teor_Phi_x} can be written as
\[
\mathbf{T}h(y)=\frac{h(l(y))}{\rho (y)}+\int_{0}^{l(y)}\bigl(K(y,t)+K(y,-t)\bigr)h(t)dt,
\]
and in such form it acts from $C[0,b]$ to $C[A_0,B]$. It is a transmutation operator in the sense of Definition \ref{Def Transmutation} if we consider it as the operator $\mathbf{T}:E_1\to E_2$, where 
\begin{align*}
    E_1 &= \{h\in C^1[0,b]: h'(0)=0\},\\
    E_2 &= \{g\in C^1[A_0,B]: g'(A_0) - \rho(A_0)f'(A_0)\cdot g(A_0)=0\}.
\end{align*}
Moreover, a whole family of such transmutation operators exists, where the space $E_2$ can be changed to an arbitrary space 
\[
E_{2,\gamma} = \{ g\in C^1[A_0,B]:g'(A_0) - \gamma \cdot g(A_0)=0\}.
\]
Let us consider $\gamma = -\frac\alpha\beta$ and the corresponding transmutation operator $\mathbf{T}_\gamma$. 

Let $u$ be a solution of equation \eqref{eq_FBP1_1} in $D$ satisfying condition \eqref{eq_FBP_b2_1}. Since for any fixed $t_0>0$ one has $u(y,t_0)\in E_{2,\gamma}$, we can consider 
\[
h = \mathbf{T}^{-1}_\gamma[u],
\]
where the transmutation operator acts with respect to the first variable. Then $h$ is a solution of the heat equation in $l^{-1}(D)$ satisfying $h_x(0,t)=0$ for $t\in (0,T]$, c.f., proof of Proposition \ref{Prop Complete system_1}. Consider its continuation $h^e$ onto $D_e = \{ (x,t): -l^{-1}(y_2(t)) < x < l^{-1}(y_2(t)),\ t\in (0,T]\}$ as an even function of the variable $x$. One can check that $h^e$ is a solution of the heat equation in $D_e$, c.f., \cite[Thm. 4.6]{colton1980analytic}.

Let $\varepsilon>0$ and a compact $K\subset D$ be fixed. Then there exist $\delta>0$ and a compact set $K_1=\{ (x,t): 0\le x\le s(t),\ t\in [\delta,T]\}$, with analytic boundary $s$, such that $l^{-1}(K)\subset K_1\subset D_e$. Consider $K_2 = \{ (x,t): -s(t) \le x\le s(t),\ t\in[\delta, T]\}$. By Proposition \ref{Prop_En} there exist $N$ and constants $a_0^{\pm},\ldots,a_N^{\pm}$ such that
\[
\max_{K_2}\left| h^e(x,t) -\sum_{n=0}^N a_n^{\pm}e_n^{\pm}(x,t)\right|<\varepsilon.
\]
Denote $h_N(x,t):=\sum_{n=0}^N a_n^{\pm}e_n^{\pm}(x,t)$ and consider 
\[
h_N^e = \sum_{n=0}^N (a_n^+ + a_n^-)e^{-\omega_n^2 t}\cos(\omega_n x) = \frac{h_N(x,t) + h_N(-x,t)}2.
\]
Then 
\begin{align*}
\max_{K_1}|h^e(x,t) - h_N^e(x,t)| &= \frac 12 \max_{K_1}| h^e(x,t) + h^e(-x,t) - h_N(x,t) - h_N(-x,t)|\\
&\le \frac 12 \left(\max_{K_1}| h^e(x,t)- h_N(x,t)| + \max_{K}| h^e(-x,t)- h_N(-x,t)|\right)<\varepsilon.
\end{align*}

Now note that each term of $h_N^e$ is an even function, hence belongs to the space $E_1$, and $\mathbf{T}_\gamma[e^{-\omega_n^2 t}\cos(\omega_n x)] = \tilde E_n(y,t)$. Hence $\mathbf{T}_\gamma[h_N^e]$ is the sought for approximation to the solution $u$.
\end{proof}

\section{\label{sec_MinProblem}Minimization problem}

In this section we describe the scheme of the numerical method
proposed. In the previous section, we saw that any solution to the PDE \eqref%
{eq_FBP1_1} can be approximated by a linear combination of functions from the CSS of transmuted exponential functions. We denote by $u^{N}$ this approximation and by $a_{n}$, $n\in\{0,\ldots,N\}$ the
respective coefficients---see equation \eqref{eq_uN_H}. Note that we reordered the set of the functions $E_{n}^{\pm}(y,t)$ into the sequence $\{u_n(y,t)\}_{n=0}^\infty$ by setting, e.g., $u_{2n} = E_{n}^{+}$ and $u_{2n+1} = E_{n}^{-}$.  We also denote by $\bar{t}=(t_{0},...,t_{N_{t}})$ an ordered numerical set of $N_{t}+1$ points on the interval $[0,T]$, with $t_{0}=0<t_{1}<...<t_{N_{t}}=T$. Similarly, we construct the vector $\bar{y}=(y_{0},...,y_{N_{y}})$, on an interval $[y_{0},y_{N_{y}}]$, the bounds will be specified further. We look for the free boundary in the form
\begin{equation}
s_{K}(t)=\sum\limits_{k=0}^{K}b_{k}\beta _{k}(t),  \label{eq_RepFB}
\end{equation}%
where $\beta _{k}:[0,T]\rightarrow \mathbb{R}$, $k=0,1...,K$ is a set of $%
K+1 $ linearly independent functions.\footnote{ We can choose a more general representation for the boundary if needed. See \cite{KrKrTorba2017} for the discussion.}

Recall that any expression of the form \eqref{eq_uN_H} is a solution to \eqref{eq_FBP1_1}. Hence, our problem now reduces to finding the coefficients $\bar{a}=(a_{0},...,a_{N})$ for the approximate solution and $\bar{b}=(b_{0},...,b_{K})$ for the free boundary in such a way that the approximate solution is close to the exact solution of Problem \ref{Problem_FBP_1}. For this purpose, according to Assumption \ref{Continuous depSol}, it is sufficient to minimize the discrepancy for the boundary conditions \eqref{eq_FBP_b2_1}--\eqref{eq_FBP_b4_1} in a suitable $L_2$ norm. We consider the following one for each boundary condition
\begin{equation}\label{Formula for norm}
\left\Vert v(\bar{t})\right\Vert ^{2}=\left\Vert
(v(t_{0}),...,v(t_{N_{t}}))\right\Vert ^{2}=\sum_{i=0}^{N_{t}}\vphantom{\sum|}''\left\vert
v(t_{i})\right\vert ^{2},
\end{equation}
where the double prime indicates
that the first and the last terms of the sum are to be halved.
This formula is the discrete approximation for the $L_2$ norm on the segment $[0,T]$, and for different choices of the points $t_k$ reduces either to trapezoidal rule (for uniformly distributed points $t_k$) or to the highly accurate Lobatto--Tchebyshev integration rule of the first kind (for $t_k$ being Tchebyshev nodes), see \cite[(2.7.1.14)]{DavisRabinowitz1984}. With this representation, the minimization problem that we have to solve takes the following form.
\begin{problem}
\label{Problem_ArgMin_A}Find \footnote{%
For a function $f:X\rightarrow Y$, the $\arg\min$ over a subset $S$ of $X$
is defined as%
\begin{equation*}
\underset{x\in S\subseteq X}{\arg\min}\,f(x):=\left\{ x:x\in S\wedge\forall
y\in S:f(y)\geq f(x)\right\} .
\end{equation*}%
}
\begin{equation*}
\underset{\left( \bar{a},\bar{b}\right) }{\arg\min}\,F\left( \bar{a},\bar{b}%
\right) ,
\end{equation*}
subject to
\begin{equation}
s_K(0) = 0,\qquad 0<s_{K}(t)\leq L,\qquad t\in(0,T],  \label{eq:_probM_condS_1}
\end{equation}
where%
\begin{equation}
F\left( \bar{a},\bar{b}\right) =\sum_{i=1}^{3}I_{i}^{2}\left( \bar{a},\bar{b}%
\right)  \label{eq_F_1}
\end{equation}
and
\begin{align*}
I_{1}\left( \bar{a},\bar{b}\right) & =\left\Vert \gamma_{11}\left( \bar{t} \right) u^{N}\left( 0,\bar{t}\right) +
\gamma_{12}\left( \bar{t} \right) \left( u^{N} \right)_{y} \left( 0,\bar{t}\right)
 -g_{1}(\bar{t}) \right\Vert , \\
\displaybreak[2]
I_{2}\left( \bar{a},\bar{b}\right) &=\left\Vert u^{N}\bigl(s_{K}\left( \bar{t}%
\right) ,\bar{t}\bigr)-g_{2}(\bar{t})\right\Vert , \\
\displaybreak[2]
I_{3}\left( \bar{a},\bar{b}\right) & =\left\Vert \left( u^{N}\right) _{y}%
\bigl(s_{K}\left( \bar{t}\right) ,\bar{t}\bigr)-g_{3}(\bar {t})\right\Vert .
\end{align*}
\end{problem}
The value of the function $F$ indicates the discrepancy with the exact solution.
\begin{remark}
\label{Rem_Linearization}For fixed $\bar{b}$, the constrained Problem \ref%
{Problem_ArgMin_A} reduces to the unconstrained least squares minimization problem for
the coefficients $\bar{a}$ and can be solved exactly. That is, for each $\bar b$ we can define
\begin{equation}\label{Linear LS}
\bar a(\bar b) :=\underset{\bar{a}}{\arg\min}\,F\left(  \bar{a},\bar
{b}\right).
\end{equation}
So instead of minimizing the value function $F$ over an $N+K+2$ dimensional space of parameters $(\bar a, \bar b)$, the problem can be reduced to minimization of the function
\begin{equation}\label{eq_F_after_LS}
\tilde F(\bar b):= F\bigl(\bar a(\bar b), \bar b\bigr)
\end{equation}
over a $K+1$ dimensional space.
This reformulation of the problem leads to a more robust convergence of the numerical method--- see \cite{herreraporter2017}. We will apply this technique to the FHRO in Section \ref{sec_NumResults}---see also \cite{KrKrTorba2017} for details in the THP case.
\end{remark}

At this point, we can schematize the algorithm for constructing an approximate solution to Problem \ref{Problem_FBP_1} starting from the
exponential series \eqref{eq_en_trig} as a CSS for the heat equation and transmuting it to CSS for equation \eqref{eq_FBP1_1}.

\subsection{Conceptual algorithm}

\begin{enumerate}[label=(\roman*)]
\item Find a particular solution $f$ for the equation \eqref{eq_PQR_1} that
satisfies \eqref{eq_PQR_2}. The SPPS (Spectral parameter power series) method of \cite{KravchenkoPorter10} can be
used or any alternative analytical or numerical method.

\item Compute the coefficients $\alpha_{n}$ and $\mu_{n}$ using the
recursive formulas \eqref{NSBF_DBKO_eq10_1}, \eqref{An recurr} and \eqref{Bn recurr}.

\item Choose a sequence $\omega_{n}$ satisfying \eqref{eq_cond_w_conv} and construct the functions $E_{n}^{\pm}(y,t)$, $n=0,\ldots,N$
and their derivatives by formulas \eqref{eq_En1}--\eqref{eq_En3}.

\item Choose the basis functions $\beta_0,\ldots,\beta_{N_k}$ for the approximation of the free boundary function in the form  \eqref{eq_RepFB}.

\item Construct the minimization function $\tilde F$ from equation \eqref{eq_F_after_LS}.

\item Run a minimization algorithm for the function $\tilde F$ under constraints \eqref{eq:_probM_condS_1}.
\end{enumerate}

\begin{remark}\label{Rmk Reduced system}
In the particular case considered in Subsection \ref{Subsect ReducedSystem}, reduced CSS can be used. The changes to the proposed algorithm are minimal: we do not need to reorder the functions $\tilde E_n$, and the functional $I_1$ is always equal to zero. We left the remaining details to the reader.
\end{remark}

The application of the above schematics on the valuation of FHRO will be presented in the next sections.

\section{\label{sec_RO}The Russian option}

The FHRO is a theoretical path-dependent financial contract, a special case of an American lookback option. It was first introduced and studied in \cite%
{shepp1993russian,shepp1995new}. The owner of the Russian option has the right, but not the obligation, to exercise it any time and receive
the supremum of stock archived during the period between the writing of an option ($t=0$) and the exercise time. Originally, the Russian option was
defined as a perpetual option (infinite horizon $T=\infty $) of the \textquotedblleft reduced regret\textquotedblright---\cite{shepp1993russian} and \cite{duffie1993arbitrage}. The problem of pricing this option complicates if we want to treat finite horizon cases ($\infty>T>0 $).

The case where the underlying asset movement is given by the geometric Brownian motion, i.e. pricing under the BSM model, was widely studied. For the infinite horizon, there is a closed form solution, that for convenience of the reader is presented in the Appendix. For the finite horizon, the theoretical results can be consulted for instance in \cite{ekstrom2004russian}, \cite{peskir2005russian} and \cite{duistermaat2005finite}. The Bachelier model was analyzed in \cite{kamenov2008bachelier, kamenov2014Dissertation}. In the latest work some theoretical results for more general models are also presented.

The price of the option satisfies a certain free boundary problem for the parabolic PDE. For the BSM model there are several quantitative studies, e.g. \cite{duistermaat2005finite} by the method referred to as $n$th-order randomization, based on a method proposed by \cite{carr1998randomization} for American options, \cite{kimura2008valuing} applying the Laplace-Carlson transform and \cite{jeon2016integral} defining an equivalent PDE problem with mixed boundary conditions and solving it using Mellin transform. These methods rely on the possibility of explicit solving the respective transformed problems and hence are restricted to the BSM model.

\subsection{The set-up of the FBP for FHRO}

The value of the FHRO\ depends on three variables: price of the underlying asset ($s$), the maximum of the underlying asset ($m$) and time ($z$). As we will see
further, it can be reduced to the FBP with only two variables, due to the
homogeneity property of the value function. The definition of the problem
that we follow is from \citet[Theorem 1]{ekstrom2004russian} and \cite{kimura2008valuing}. An equivalent derivation can be consulted in \citet[
Theorem 3]{duistermaat2005finite}, \cite{peskir2005russian} and \citet[Section 26.2.5]{PeskirShiryaev06}.

Under the risk neutral measure the FHRO at the time $z\in \left[ 0,T\right] $, with $T>0$ being the time horizon of the option price, is given by
\begin{equation*}
V\left( s,m,z\right) =\underset{0\leq \theta _{z}\leq T-z}{\operatorname{ess~sup} }E_{s,m}%
\left[ e^{-r\theta _{z}}M_{\theta _{z}}\right] ,  
\end{equation*}%
where
\begin{equation*}
M_{z}=m\vee \sup_{0\leq u\leq z}S_{u},\qquad z\geq 0,
\end{equation*}%
is the supremum process,
\begin{equation*}
S_{z}=s\exp \left\{ \left( r-\delta -\frac{1}{2}\sigma_{0}^{2}\right)
z+\sigma_{0} B_{z}\right\} ,\qquad z\geq 0,
\end{equation*}%
is the price process for the underlying asset, with: $S_{0}=s$ -- the initial
fixed value; $r>0$ -- the risk free rate of interest; $\delta \geq 0$ --
the continuous dividend rate; $\sigma _{0}>0$ -- the volatility coefficient of
the asset price; $B_{z}$ -- the one-dimensional standard Brownian motion on a filtered probability space
$\left( \Omega ,\mathbb{F},\left( \mathcal{F}_{z}\right)_{z\geq 0},\mathbb{Q}\right)$; $\left( \mathcal{F}_{z}\right) _{z\geq 0}$ -- the filtration generated by $B_{z}$; $\mathbb{Q}$ -- the
probability measure chosen so that the stock has a mean of return $r$; $
\theta _{z}$ -- the stopping time of the filtration $\mathbb{F}$; $E_{s,m}
\left[ \cdot \right] \equiv E\left[ \cdot \mid \mathcal{F}_{0}\right] =E\left[ \cdot \mid
S_{0}=s,M_{0}=m\right] $ is calculated under the risk neutral measure $
\mathbb{Q}$. Also, we define the early exercise boundary
\begin{equation*}
S(m,z)=\inf \{s\in \lbrack 0,m]:(s,m,z)\in \mathcal{C}\},
\end{equation*}%
where $\mathcal{C=}\left\{ \left( s,m,z\right) :V\left( s,m,z\right)
>m\right\} $ is the so called continuation region. The function $S(m,z)$ is
non-decreasing and continuous in $z$ for $\delta >0$, see \cite[Theorem 2]{ekstrom2004russian} and \cite{duistermaat2005finite}).

\begin{theorem}[{\citet[Theorem 1]{ekstrom2004russian}}]
\label{Teor_EkstromV} The value of the FHRO is a solution $V(s,m,z)$ of the following free
boundary problem:%
\begin{equation*}
V_{z}+\frac{\sigma _{0}^{2}}{2}s^{2}V_{ss}+(r-\delta )sV_{s}-rV=0\qquad \text{for }\quad S(m,z)<s\leq m  
\end{equation*}%
with boundary conditions:
\begin{gather*}
V(s,m,z)=m\qquad \text{if }\quad S(m,z)\geq s,  
\\
\lim_{\varepsilon \rightarrow 0}\frac{1}{\varepsilon }(V(s,s+\varepsilon
,z)-V(s,s,z))=0,  
\\
V(s,s,z)=0\qquad\text{on }\quad S(m,z)=s,  
\\
V_{s}\left( s,m,z\right) \leq V(1,1,z),  
\\
V(s,m,T)=m.
\end{gather*}
\end{theorem}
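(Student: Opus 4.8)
The plan is to exploit the standard correspondence between the optimal stopping problem that defines $V$ and a free boundary problem for the infinitesimal generator of the underlying diffusion. First I would check that $V$ is well defined, finite and continuous. Since $r>0$, the discount factor $e^{-r\theta_z}$ dominates the growth of the supremum process $M_{\theta_z}$, so the supremum defining $V$ is finite; choosing the trivial stopping time $\theta_z=0$ gives $V(s,m,z)\ge m$, so the stopping region $\{V=m\}$ is nonempty. The scaling of geometric Brownian motion yields the homogeneity $V(\lambda s,\lambda m,z)=\lambda V(s,m,z)$, which, together with monotonicity of $V$ in $m$ and in the remaining time $T-z$, I would use both to obtain continuity of $V$ and to establish the final gradient estimate.

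Next I would invoke the general theory of optimal stopping for the two-dimensional Markov process $(S_z,M_z)$: the value function is the smallest superharmonic majorant of the payoff, and the dynamic programming principle makes $e^{-rt}V(S_t,M_t,z+t)$ a supermartingale which is a true martingale up to the optimal stopping time $\tau=\inf\{t:(S_t,M_t,z+t)\notin\mathcal{C}\}$. In the stopping region this forces $V(s,m,z)=m$ for $s\le S(m,z)$. In the continuation region $\mathcal{C}$, interior parabolic (Schauder) regularity upgrades $V$ to a $C^{2,1}$ function, so the martingale property combined with It\^o's formula makes the drift vanish and yields the equation $V_z+\tfrac{\sigma_0^2}{2}s^2V_{ss}+(r-\delta)sV_s-rV=0$, the operator being the discounted generator of $S_z$ under the risk-neutral measure. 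The terminal condition is immediate: when $z=T$ the only admissible stopping time is $\theta_z=0$, hence $V(s,m,T)=M_0=m$.

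The free boundary conditions are the heart of the matter, and the crucial structural fact is that $M_z$ has bounded variation and increases only on the diagonal $\{S_z=M_z\}$. Applying It\^o's formula to $e^{-rt}V(S_t,M_t,z+t)$ therefore produces, besides the generator term and the $dB_t$ martingale term, a term $e^{-rt}V_m(S_t,M_t,z+t)\,dM_t$ that is supported on $\{s=m\}$; for the process to remain a martingale inside $\mathcal{C}$ this term must vanish, which is exactly the normal reflection condition $\lim_{\varepsilon\to0}\varepsilon^{-1}\bigl(V(s,s+\varepsilon,z)-V(s,s,z)\bigr)=0$ on the diagonal. At the free boundary $s=S(m,z)$ the smooth-fit principle for regular one-dimensional diffusions gives continuity of $V_s$ across the boundary and hence the smooth-pasting condition there. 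Finally the gradient bound $V_s(s,m,z)\le V(1,1,z)$ follows from homogeneity: $V_s$ is homogeneous of degree zero, the Euler relation $sV_s+mV_m=V$ together with $V_m=0$ on the diagonal gives $V_s(m,m,z)=V(1,1,z)$, and convexity of $V$ in $s$ (standard for option values) makes $V_s$ increasing in $s$, so its maximum on $[S(m,z),m]$ is attained at $s=m$.

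The main obstacle will be the regularity analysis underpinning these last steps: establishing that $V$ is genuinely $C^{2,1}$ in the open continuation region and $C^{1}$ up to both the a priori unknown free boundary $S(m,z)$ and the diagonal, so that It\^o's formula, the smooth-fit identity and the reflection condition can be derived rigorously rather than heuristically. This is where parabolic interior estimates must be combined with a careful treatment of the bounded-variation maximum process and of the regularity of the free boundary itself.
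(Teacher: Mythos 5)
The paper does not prove this theorem: it is imported verbatim from Ekstr\"om (2004, Theorem 1), so there is no internal proof to compare against. Your sketch reproduces the standard optimal-stopping derivation used in that reference --- the supermartingale/dynamic-programming characterization gives $V=m$ in the stopping region and, via It\^o's formula and interior parabolic regularity, the PDE in the continuation region; the fact that $M_t$ has bounded variation and increases only on the diagonal $\{S_t=M_t\}$ yields the normal-reflection condition; smooth fit gives the condition at the free boundary; and your Euler-relation argument (homogeneity of degree one plus $V_m=0$ on the diagonal gives $V_s(m,m,z)=V(1,1,z)$, and convexity of $V$ in $s$ then gives the bound) is a correct route to the gradient estimate. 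One point worth making explicit: the third boundary condition as printed, $V(s,s,z)=0$ on $S(m,z)=s$, cannot be meant literally, since it contradicts $V\ge m>0$; you correctly read it as the smooth-fit condition $V_s(s,m,z)=0$ at $s=S(m,z)$, which is what the cited source states. The gap you yourself flag --- establishing enough regularity of $V$ up to the a priori unknown boundary and up to the diagonal for It\^o's formula, smooth fit and normal reflection to be rigorous --- is the genuinely hard part, and it is exactly what the cited reference supplies; as a proof plan your proposal is sound but incomplete at precisely that point.
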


The homogeneity of the function $V$, that is
\begin{equation*}
V(ks,km,z)=kV(s,m,z),\qquad \text{ for all }k\in \mathbb{R}^{+},  
\end{equation*}%
suggests that the problem is two dimensional. Consider the following change of the dependent variable
\begin{equation}
V(s,m,z)=mV\left( \frac{s}{m},1,z\right) =:mu\left( 1-y,t\right),\label{eq_Var_u}
\end{equation}%
where
\begin{equation}
y=1-s/m\qquad\text{and}\qquad t=T-z  \label{eq_Var_y_t}
\end{equation}%
are the new independent variables. Moreover, we also introduce the following notation for the free boundary
\begin{equation*}
b(t):=1-S(m,T-z)/m.  
\end{equation*}%
Then the FBP problem for the FHRO under the BSM model can be written as follows.

\begin{problem}
\label{Problem_KimuraFBPv}Find functions $u(y,t)$ and $b(t)$, such that
\begin{equation}\label{eq_DiffOp_Complete}
-u_{t}+\mathbf{M}u=0,\qquad\text{for \ }b\left( t\right) >y\geq0,t\in
\lbrack0,T],
\end{equation}
where
\begin{equation}
\mathbf{M}=\frac{1}{2}\sigma_{0}^{2}\left( 1-y\right) ^{2}\partial
_{yy}-\left( r-\delta\right) \left( 1-y\right) \partial_{y}-r,
\label{eq_DiffOp_M}
\end{equation}
and the boundary conditions%
\begin{align}
u\left( b\left( t\right) ,t\right) & =1,  \label{eq_FBP_a1} \\
u_{y}\left( b\left( t\right) ,t\right) & =0,  \label{eq_FBP_a2} \\
u\left( 0,t\right) +u_{y}\left( 0,t\right) & =0,  \label{eq_FBP_a3}\\
u_{y}(y,t)+u(0,t)&\geq0,  \label{eq_FBP_a5}\\
b\left( 0\right) &=0  \label{eq_FBP_a6}
\end{align}
are satisfied.
\end{problem}


Problem \ref{Problem_KimuraFBPv} compared to Problem \ref{Problem_FBP_1} has an additional condition \eqref{eq_FBP_a5}.
If one looks at the proof of Theorem 26.3 from \cite{PeskirShiryaev06}, especially at the part including formula (26.2.31), one can see that the condition \eqref{eq_FBP_a5} is used to deduce the monotonicity and finiteness of the boundary function $b(t)$. Having this property established, the corresponding FBP possesses a unique solution amongst monotone boundaries, see part 7 of the proof. So we can reformulate Problem \ref{Problem_KimuraFBPv} as follows, without additional conditions compared to Problem \ref{Problem_FBP_1}.

\begin{problem}
\label{Problem_KimuraFBPv2}Find functions $u(y,t)$ and $b(t)$, a monotone non-decreasing function, such that equation \eqref{eq_DiffOp_Complete}
and boundary conditions \eqref{eq_FBP_a1}, \eqref{eq_FBP_a2}, \eqref{eq_FBP_a3} and \eqref{eq_FBP_a6}
are satisfied.
\end{problem}

Problem \ref{Problem_KimuraFBPv2} has non-consistent boundary conditions, i.e., it is impossible to satisfy all the boundary conditions simultaneously at the point $(0,0)$. Indeed, conditions \eqref{eq_FBP_a1} and \eqref{eq_FBP_a2} imply $u(0,0)+u_y(0,0)=1$, a contradiction to the condition \eqref{eq_FBP_a3}. This observation already leads us to expect the computational difficulties near the origin.

We will refer to $u$ from Problem \ref{Problem_KimuraFBPv2} as value function and to $u(y,T)$ the option value, these are usually the functions studied in the literature, we can compute the value of the Russian option from these functions by the transformations \eqref{eq_Var_u} and \eqref{eq_Var_y_t}.
\begin{remark}
The classical transformation can be used to reduce the differential operator $\mathbf{M}$ from \eqref{eq_DiffOp_M} to $pqw$ form \eqref{eq_FBP1_1}---see e.g. \citet[Sections 0.4.1-3]{Polyanin2001}.
\end{remark}

\begin{remark}\label{rem_assymptBoundary} \emph{Theoretical results for the free boundary, asymptotics at the origin and the infinite horizon case.}
In the case of the infinite horizon (i.e., perpetual option) the problem can be solved exactly---see \cite{shepp1993russian, shepp1995new}. For the
sake of completeness we have included the solution in the Appendix. The infinite horizon is an important bound that we can use in the minimization process,
since we know that the value of the FHRO should be lower.

The free boundary can not have a smooth behaviour at the origin. This was confirmed by the theoretical result established in \cite{ekstrom2004russian}
and \cite{peskir2005russian}. The asymptotics as $t\rightarrow 0$ is given by
\begin{equation}
b(t)\sim \sigma _{0}\sqrt{t\left\vert \log (1/t)\right\vert }.
\label{eq_assymptotics_bt}
\end{equation}

\end{remark}

\section{\label{sec_NumResults}Numerical experiments}

In this section, we analyze the application of the proposed
algorithm as well as the arising numerical issues and their solutions. The results confirm the convergence of the method as well as some numerical values that appear in the bibliography for Problem \ref{Problem_KimuraFBPv2}.

\subsection{First steps}
For the implementation details of the first two steps of the proposed algorithm, i.e., construction of a particular solution $f$ and of the coefficients $\alpha_n$ and $\mu_n$ we refer the reader to \cite{KravchenkoNavarroTorba2017}, \cite{KravchenkoTorba_Neumann_2018}, \cite{KrKrTorba2017} and only want to mention that since the maximum upper boundary $b_{\infty }$ is known---see Appendix \ref{Sec_Ap_InfHor}, we only need values of ${E}^{\pm}_{n}(y)$ on the interval $[0,b_{\infty }]$. In our computations we have used this knowledge and chose the interval $[0,L]$ to be a bit larger than $[0,b_{\infty }]$. All the functions involved were represented by their values on 10000 points uniform mesh.

\subsection{The choice of $\{\protect\omega _{n}\}$}
The optimal choice for the set $\{\omega _{n}\}$ is an open question. Since the condition \eqref{eq_cond_w_conv} is for the convergence at infinity, we have total liberty for the choice of the first finite number of $\omega$'s. The only exception is that the pair of solutions for $\omega=0$ is constructed as a part of the representation for $c(\omega,y)$ and $s(\omega,y)$ from Theorem \ref{Teor_cs_pqr}, see Remark \ref{Rem Solutions w 0}. For this reason we always include $\omega_0=0$ in the set $\{\protect\omega _{n}\}$ and from now on we assume that $0=\omega_0<\omega_1<\ldots < \omega_N<\ldots$.

In the experiments, we used a pseudo-random algorithm to generate $\{\omega_{n}\}$ that depend on the set up step $d>0$ and density $\Delta$ and works as follows: it starts with $\omega_0=0$ and set $\omega_{n+1}=\omega_n+r_n+d$, where $r_n$ is a random number between 0 and $\Delta$. In our experience, too few leads to less accurate approximation, too many leads to functions linearly dependent up to machine error and hence the difficulty in solving the related linear problems. The upper bound for $\{\omega _{n}\}$ can be easily established: it is set where the value of $e^{-\omega _{n}^2T}$ becomes too small, (e.g., we have considered $\abs{\omega_{n}^2T}<100$). And we found that about $50-100$ values of $\omega_n$ allow us to obtain sufficiently accurate results, further increase in the number of $\omega_n$ does not lead to noticeable improvement.

This arbitrariness of the choice allows to test the algorithm under different choices of $\omega _{n}$, though its convergence to almost the same values is another confirmation of its robustness.

\subsection{Reduced system of solutions}
Since boundary condition \eqref{eq_FBP_a3} is of particular type considered in Subsection \ref{Subsect ReducedSystem}, we use reduced complete system of solutions $\{\tilde E_n\}$. Let us denote $\tilde E_n(y) = c(\omega_n,y) + \beta_n s(\omega_n,y)$. Then $\tilde E_{n}(y, t) = e^{-\omega_n^2 t} \tilde E_n(y)$. We approximate the value function by a truncated series
\begin{equation}
u_{N}(y,t)=\sum\limits_{n=0}^{N}a_{n}\tilde{E}_{n}(y)e^{-\omega _{n}^{2}t}.
\label{eq_uN_RO}
\end{equation}%

The computation of the value function \eqref{eq_F_1} requires the possibility to compute values of $\tilde {E}_{n}(y)$ at arbitrary point $y\in[0,L]$. For that we have approximated the functions $\tilde E_n(y)$ by splines using the routine \texttt{spapi} in Matlab.

\subsection{Representation of the free boundary}

The boundary asymptotics \eqref{eq_assymptotics_bt} presented in Remark \ref{rem_assymptBoundary} possesses factor $\sqrt t$ and unbounded derivative at $t=0$ suggesting that the polynomial approximation is not the best choice for the free boundary and that the following form
\begin{equation}
s_{K}(t)=\sqrt{t}\left( \sum\limits_{k=0}^{K}b_{k}t^{k/2}\right)
\label{eq_boundary_Ap}
\end{equation}
may be better. For faster convergence of the minimization  we have orthonormalized the set of functions $\{t^{k/2}\}_{k=1,...,K+1}$, using the $L^{2}(0,T)$ norm. We have for any polynomials $P_n$ and $P_m$
\[
\int_0^T \sqrt t P_n(\sqrt t) \cdot \sqrt t P_m(\sqrt t)\,dt = 2\int_0^{\sqrt{T}} t^3 P_n(t)P_m(t)\,dt.
\]
The orthogonal polynomials on the segment $[0,\sqrt{T}]$ with the weight $t^3$ coincide up to a multiplicative constant with the Jacoby polynomials $P_n^{(0,3)}\left(\frac{2t}{\sqrt T}-1\right)$, see \cite[(4.1.2)]{Szego75}. Hence using the formula (4.3.3) from \cite{Szego75} we obtain that the orthonormalized set consists of the functions
\[
\beta_k(t) =  \sqrt\frac{(k+2)t}{4T} P_k^{(0,3)}\left(2\sqrt\frac{t}{T}-1\right),\qquad k=0,\ldots, K.
\]
For the computations $K=9$ was used.

The grid $\bar{t}$ was taken to contain 2000 points and was selected to be less dense near $t=0$ (the problematic point) and more dense near $t=T$. For that we selected the points $t_n$
as a half of the Tchebyshev points, by the formula $t_{n}=T\sin (n\pi /(2N_{t}))$. The point $t_{0}=0$ was excluded due to inconsistency of the boundary conditions at this point. We would like to mention that the norm \eqref{Formula for norm} under such selection of the points $t_n$ can be reduced to Lobatto-Tchebyshev integration rule of the first kind, see \cite[(2.7.1.14)]{DavisRabinowitz1984}. We would also like to mention that the uniform distribution for the points $t_n$ worked almost equally well.

\subsection{Solution of the least squares minimization problem \eqref{Linear LS}}

For the fixed $
\overline{\tilde{b}}$, the minimization Problem \ref{Problem_ArgMin_A}
reduces to an unconstrained least squares minimization problem \eqref{Linear LS} that can be solved exactly. This solution will be denoted by $\tilde{a}$. It can be constructed as
follows. Under the notation
\begin{equation*}
\tilde{s}_{K}(t)=\sum\limits_{k=0}^{K}\tilde{b}_{k}\beta _{k}(t),
\end{equation*}%
for the free boundary with fixed coefficients $\overline{\tilde{b}}$, the
boundary conditions \eqref{eq_FBP_a1} and \eqref{eq_FBP_a2} take the form
\begin{align*}
\bar{1}& = u_{N}(\tilde{s}(\bar{t}),\bar{t})=\sum\limits_{n=0}^{N}\tilde{a}%
_{n}\tilde{E}_{n}(\tilde{s}(\bar{t}))e^{\omega _{n}\bar{t}}, \\
\bar{0}& = (u_{N})_{y}(\tilde{s}(\bar{t}),\bar{t})=\sum\limits_{n=0}^{N}%
\tilde{a}_{n}\tilde{E}_{n}^{\prime }(\tilde{s}(\bar{t}))e^{\omega _{n}\bar{t}%
}.
\end{align*}%
The relations for $\tilde{a}$ can be written in the matrix form as%
\begin{equation}
\mathbf{D}\tilde{a}=\mathbf{g},  \label{eq_linSys}
\end{equation}%
where%
\begin{equation*}
\mathbf{D}=\left[
\begin{array}{ccc}
\tilde{E}_{0}(\tilde{s}(\bar{t}))e^{\omega _{0}\bar{t}} & ... & \tilde{E}%
_{N}(\tilde{s}(\bar{t}))e^{\omega _{0}\bar{t}} \\
\tilde{E}_{0}^{\prime }(\tilde{s}(\bar{t}))e^{\omega _{0}\bar{t}} & ... &
\tilde{E}_{N}^{\prime }(\tilde{s}(\bar{t}))e^{\omega _{0}\bar{t}}%
\end{array}%
\right] \qquad \text{and}\qquad\mathbf{g}=\left(
\begin{array}{c}
\bar{1} \\
\bar{0}%
\end{array}%
\right).
\end{equation*}%
The solution of this overdetermined system coincides with the unique solution
of a fully determined one---see \citet[Theorem 5.14]{MadsenNielsen2010}, \cite{lawson1995solving} or \cite{wright2006numerical} for various methods of
solution. Note that the linear problem \eqref{eq_linSys} is
ill-conditioned, meanwhile is better than the one appearing in relation with the generalized heat polynomials, see \cite{KrKrTorba2017}. As a result, we were able to work with approximations \eqref{eq_uN_RO} containing as many as 100 functions $\tilde E_n$. However direct solution of the system \eqref{eq_linSys} results in large coefficients in the solution vector $\tilde a$ and hence in large round-off errors in the resulting approximate solution \eqref{eq_uN_RO}.
This can be easily solved by applying Tikhonov regularization to find a solution vector $\tilde a$ having relatively small coefficients. We have used the Matlab package \texttt{Regularization Tools} by Christian Hansen (see, e.g., \cite{hansen1994regularization}) to implement the regularization.

\subsection{Minimization process}
Minimization of the function $\tilde F$ from \eqref{eq_F_after_LS} was done with the help of \texttt{fmincon} function from Matlab. As the initial guess for the free boundary we took $s_K=c\beta_0$, where a constant $c$ was such that $s_K(T)<b_\infty$.

Two additional implementation details were somewhat unexpected to us however resulted in more robust convergence and lower resulting minimum value for the function $\tilde F$.

Fist, instead of minimizing the function $\tilde F$, we run the minimization process for the function $\sqrt{\tilde F}$. As a result, if in an experiment for the function $\tilde F$ the lowest value found by \texttt{fmincon} was $1.3\cdot 10^{-4}$, when applied to the function $\sqrt{\tilde F}$ the returned minimum value for the function $\tilde F$ was $5\cdot 10^{-9}$.

Second, the robustness of the minimization process as well as the returned minimal value may improve by posing additional constraints for the problem, letting somehow the function \texttt{fmincon} to avoid local minimums. The problem formulation possesses constraint \eqref{eq:_probM_condS_1} and additionally (see formulation of Problem \ref{Problem_KimuraFBPv2}) asks the free boundary to be monotone non-decreasing function, which can be written for our approximate boundary as
\begin{equation}\label{Constrain 2}
    s_K'(t)\ge 0, \qquad 0<t\le T.
\end{equation}
Additionally to these two natural constraints we considered the following one: we asked the free boundary to be a concave function, such form of the boundary can be see in \cite{kimura2008valuing}, \cite{jeon2016integral}. That is, in terms for our approximate boundary we posed additionally
\begin{equation}\label{Constrain 3}
    s_K''(t)\le 0, \qquad 0<t\le T.
\end{equation}
This additional constraint resulted to produce excellent results. For different choices of the exponents $\{\omega_k\}$ and different initial guesses for the free boundary, minimization process always converged to very close results. We have tried to improve the minimum by using returned vector $\bar b$ as an initial guess and running minimization process without additional constraint \eqref{Constrain 3} however with no success. Other standard ideas like to run the minimization process for a small $K$ and reuse the returned vector padded with zeros as an initial guess for larger $K$ do not produce significant improvements.

\subsection{Numerical results presentation}

There are several quantitative studies in the literature on the FHRO for the
BSM model. We will mainly compare our results with the Laplace--Carlson transform method (LCM) from \cite{kimura2008valuing}
for the long horizon and with the recursive integration method (RIM) from \cite{jeon2016integral} for the short horizon.\footnote{We would like to thank Junkee Jeon for providing us additional values that where not presented in their paper.} For the short horizon we have other values for the comparison, produced by the binomial tree model (BTM) and also reported in \cite{jeon2016integral}. We will refer as TES (transmuted exponential system) for the results produced by the proposed method

We start by presenting in Figure \ref{GraphValueFunction_1} the solution $u$, value option surface. As expected, it increases with time (recall that in our notation $t=0$ is the option expiry) and decreases with the initial value of variable $y$ (recall that $y=0$ corresponds to the initial value of the coefficient $s/m = 1$, i.e. the initial values of the option process and of the supremum process coincide). The condition \eqref{eq_FBP_a5} is satisfied. The cuts for the value of the option in time $T$, i.e. $(y,u(y,T))$ and the free boundary $(t,s_{K}(t))$ are presented in Figure \ref{Fig_Ex_FB_OptionValue}. We have chosen the following standard parameters for the model: $r=0.05$, $\delta =0.03$ and $\sigma _{0}=0.3$.

\begin{figure}[H]
\centering
  \includegraphics*[trim= 0 0 0 0,scale=1.00]{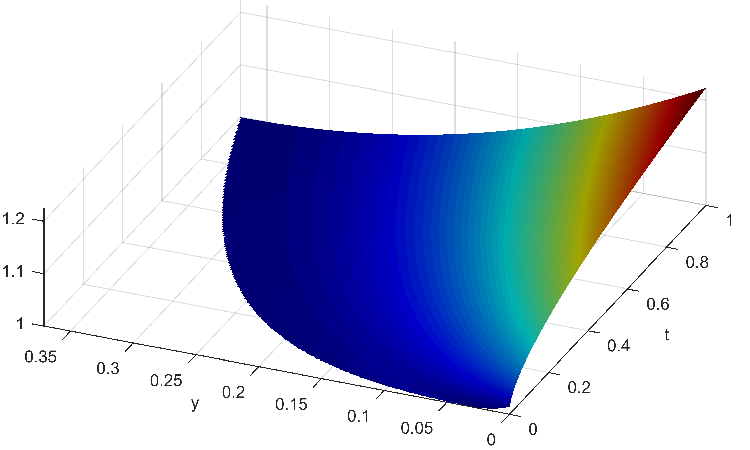}
  \caption{The value function for Problem \ref{Problem_KimuraFBPv2}, with parameters $r=0.05$, $\delta=0.03$, $\sigma_{0}=0.3$, $T=1$.} \label{GraphValueFunction_1}
\end{figure}


\begin{figure}[H]
  \centering
    \begin{tabular}{cc}
      \resizebox{3in}{!}{\includegraphics{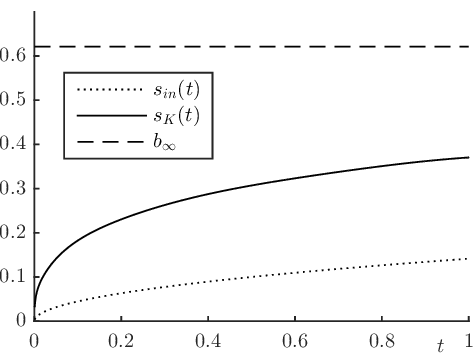}} &
      \resizebox{3in}{!}{\includegraphics{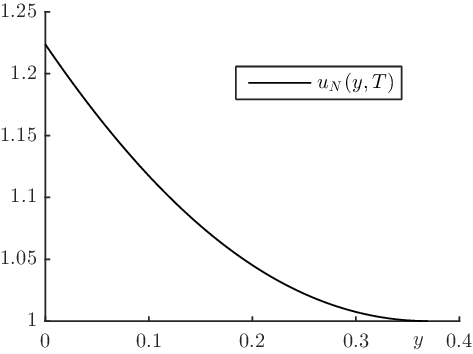}} \\
    \end{tabular}
    \caption{Left: the free boundary $s_{K}(t)$, the initial boundary $s_{in}(t)=0.1\beta_{0}(t)$ and the infinite horizon bound $b_{\infty}=0.6211$. Right: the value of the option, i.e. $u_{N}(y,T)$. Parameters (for both figures): $T=1$, $\sigma=0.3$, $\delta=0.03$, $r=0.05$, $N_t=2001$, $K=10$ and $\omega_{n}$ selected with a fixed step of $1/10$ and random step of $1/3$ (resulting in $N=68$). }
    \label{Fig_Ex_FB_OptionValue}
\end{figure}

In Figure \ref{Fig_Errors} the typical absolute errors that we obtain for the boundary conditions \eqref{eq_FBP_a1} and \eqref{eq_FBP_a2} are presented. Recall that condition \eqref{eq_FBP_a3} is satisfied by construction. In Figure \ref{Fig_Coefficients} the typical absolute values of the coefficients $\bar{a}$ and $\bar{b}$ obtained by solving Problem \ref{Problem_ArgMin_A} are presented. One can appreciate the smallness of the coefficients $\bar{a}$ due to the Tikhonov regularization and the rapid decrease in the coefficients $\bar{b}$ as the consequence of the applied orthonormalization.

\begin{figure}[H]
  \centering
    \begin{tabular}{cc}
      \resizebox{3in}{!}{\includegraphics{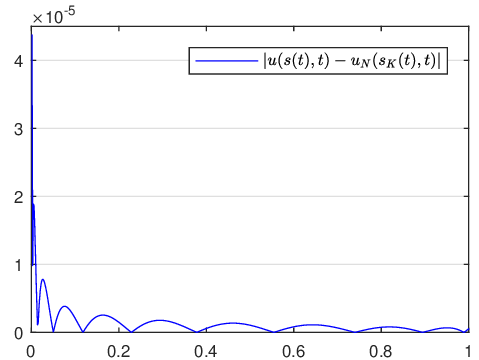}} &
      \resizebox{3in}{!}{\includegraphics{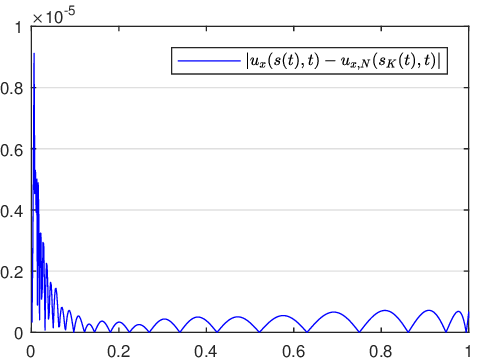}} \\
    \end{tabular}
  \caption{The typical approximation errors for the boundary conditions \eqref{eq_FBP_a1} and \eqref{eq_FBP_a2}, with the same parameters as used to produce Figure \ref{Fig_Ex_FB_OptionValue}.}
    \label{Fig_Errors}
\end{figure}


\begin{figure}[H]
  \centering
    \begin{tabular}{cc}
      \resizebox{3in}{!}{\includegraphics{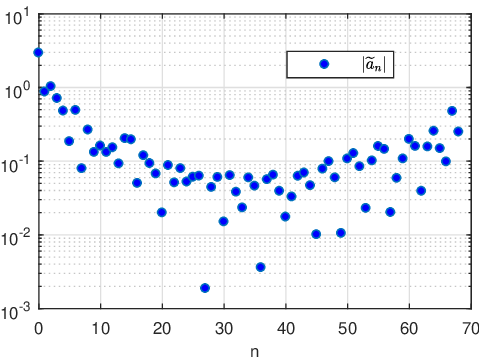}} &
      \resizebox{3in}{!}{\includegraphics{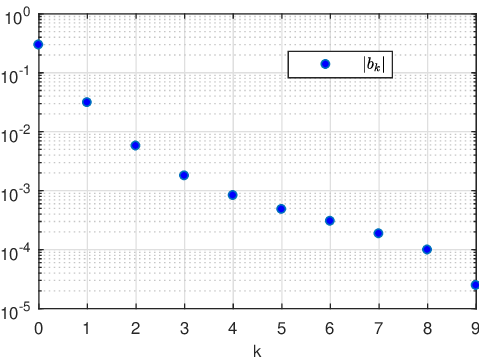}} \\
    \end{tabular}
  \caption{The typical absolute values of the coefficients $\widetilde{a}$ and $\bar{b}$ for Problem \ref{Problem_ArgMin_A}, with the same parameters as used to produce Figure \ref{Fig_Ex_FB_OptionValue}.}
    \label{Fig_Coefficients}
\end{figure}

\begin{table}[h]
\centering
{\small\setlength{\tabcolsep}{3pt} 
\begin{tabular}{l|llll|llll|llll} 
\hline
&\multicolumn{ 4}{c}{$u(0,T)$} &\multicolumn{ 4}{c}{$u(0.1,T)$} &\multicolumn{ 4}{c}{$u(0.2,T)$}\\
\hline
$T$ & TES & LCM & BTM & RIM & TES & LCM & BTM & RIM & TES & LCM & BTM & RIM\\
\hline
1/3 & 1.1340 &  & 1.1324 & 1.1335 & 1.0462 &  & 1.0452 & 1.0454 & 1.0065 &  & 1.0062 & 1.0064 \\
7/12 & 1.1744 &  & 1.1727 & 1.1742 & 1.0771 &  & 1.0761 & 1.0765 & 1.0208 &  & 1.0203 & 1.0203 \\
1 & 1.2237 & 1.2188 &  & 1.2235 & 1.1175 & 1.1125 &  &  & 1.0453 & 1.0426 &  &  \\
2 & 1.3078 &  &  &  & 1.1891 &  &  &  & 1.0968 &  &  &  \\
5 & 1.4401 & 1.4228 &  &  & 1.3049 & 1.2890 &  &  & 1.1892 & 1.1741 &  &  \\
10 & 1.5508 & 1.5273 &  &  & 1.4029 & 1.3816 &  &  & 1.2712 & 1.2517 &  &  \\
40 & 1.6831 &  &  &  & 1.5208 &  &  &  & 1.3718 &  &  &  \\
100 & 1.6904 &  &  &  & 1.5273 &  &  &  & 1.3775 &  &  &  \\
\hline
$\infty$&\multicolumn{ 4}{c}{$1.6904$} &\multicolumn{ 4}{c}{$1.5273$} &\multicolumn{ 4}{c}{$1.3769$}\\
\hline
\end{tabular}
}
\caption{Option value for Problem \ref{Problem_KimuraFBPv2}. The fixed parameters are $r=0.05$, $\delta=0.03$ and $\sigma_{0}=0.3$.}
\label{Table1_1}
\end{table}

In Table \ref{Table1_1} the values of the option for the different time horizons $T$ are shown, borrowing the parameter configuration of \citet[Table 1]{kimura2008valuing} and \citet[Table 1]{jeon2016integral}. One can appreciate an excellent agreement of the results produced by the proposed method with those delivered by the RIM and slightly worse agreement with the results produced by the BTM. The latter is due to the fact that even 10000 steps used is insufficient for the BTM to be precise to 4 figures. As for the  results from \cite{kimura2008valuing}, there are two concerns. First, the method used in \cite{kimura2008valuing} is based on the Laplace-Carlson transform and  requires the option value to be defined for any $t\in(0,\infty)$ and to satisfy an equation similar to \eqref{eq_DiffOp_Complete} for any $t>0$. That is, a solution should have a continuation across the free boundary satisfying the same initial condition at $t=0$. It is not clear why this rather strong assumption holds, and if not, how close is the obtained solution to the exact one. Second, the inversion of the Laplace-Carlson transform was computed by  the Gaver-Stehfest method which is rather delicate to implement and can result in relative errors as high as several percent, see \cite{kuznetsov2013} and references therein, no error analysis was presented. Nevertheless, our results are quite close to those of \cite{kimura2008valuing}.

We can also observe from Table \ref{Table1_1} that as $T$ increases the algorithm converges to the infinite horizon value. For $T=100$, we are already very close to the theoretical value of the perpetual option.

In Figure \ref{Fig_DiffTax_rAndSigma} the value of the option under different initial conditions is revealed. By the definition of $y$ in \eqref{eq_Var_y_t} the option is more valuable if the initial supremum of the process is the same as the initial value of the underlying, i.e. $s/m=1$. We present this curve under different financial parameters $\sigma $ and $r$, that can be compared with \citet[Figures 2 and 3]{jeon2016integral}.

\begin{figure}[h]
  \centering
    \begin{tabular}{cc}
      \resizebox{3in}{!}{\includegraphics{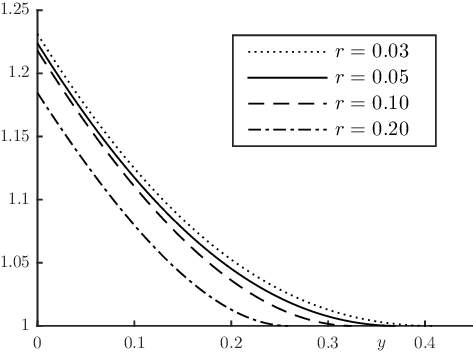}} &
      \resizebox{3in}{!}{\includegraphics{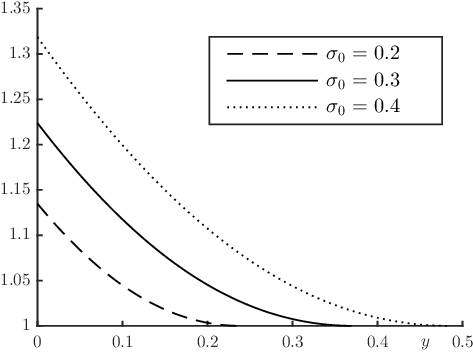}} \\
    \end{tabular}
  \caption{\small{Option value under different initial conditions. The common parameters are $T=1$, $\delta =0.03$. Left: $\sigma_{0}=0.3$. Right: $r=0.05$.}}
    \label{Fig_DiffTax_rAndSigma}
\end{figure}
\section{Final comments and future research}\label{sec_Conslusion}

In summary, the proposed method has a lot of potential for further financial engineering applications possessing path-dependency and early exercise
features such as lookback options, American options, etc. The method is not restricted to the BSM operator and can easily be applied to any other time-independent differential operator \eqref{eq_parabolIntro}.

\section*{Acknowledgements}

Research was supported by CONACYT, Mexico via the projects 222478 and 284470.
This work was supported by Funda\c{c}\~{a}o para a Ci\^{e}ncia e a Tecnologia, grant UIDB/00315/2020.
The first named author would like to express his gratitude to the Excellence
scholarship granted by the Mexican Government via the Ministry of Foreign
Affairs which gave him the opportunity to develop this work during his stay
in the CINVESTAV, Mexico. Research of Vladislav Kravchenko was partially supported by the
Regional mathematical center of the Southern Federal University with the Agreement 075--02--2022--893 of the Ministry of Science and Higher Education of Russia.
The authors thank the helpful comments and discussions of the participants at the 10th World Congress of the Bachelier Finance Society (Dublin, Ireland).

\section*{Conflicts of Interest}
The authors declare that there are no conflicts of interest regarding the publication of this paper.

\section*{Data availability}
The data that support the findings of this study are available upon reasonable request.


\bibliographystyle{model2-names}

\begin{thebibliography}{43}
\expandafter\ifx\csname natexlab\endcsname\relax\def\natexlab#1{#1}\fi
\expandafter\ifx\csname url\endcsname\relax
  \def\url#1{\texttt{#1}}\fi
\expandafter\ifx\csname urlprefix\endcsname\relax\def\urlprefix{URL }\fi
\providecommand{\eprint}[2][]{\url{#2}}
\providecommand{\bibinfo}[2]{#2}
\ifx\xfnm\relax \def\xfnm[#1]{\unskip,\space#1}\fi
\bibitem[{Alexidze(1991)}]{Alexidze1991}
\bibinfo{author}{Alexidze, M.}, \bibinfo{year}{1991}.
\newblock \bibinfo{title}{Fundamental functions in approximate solutions of
  boundary value problems (in Russian)}.
\newblock \bibinfo{publisher}{Moscow: Nauka}.
\bibitem[{Black and Scholes(1973)}]{BlackScholes1973}
\bibinfo{author}{Black, F.}, \bibinfo{author}{Scholes, M.},
  \bibinfo{year}{1973}.
\newblock \bibinfo{title}{The pricing of options and corporate liabilities}.
\newblock \bibinfo{journal}{Journal of Political Economy} \bibinfo{volume}{81},
  \bibinfo{pages}{637--654}.
\bibitem[{Carr(1998)}]{carr1998randomization}
\bibinfo{author}{Carr, P.}, \bibinfo{year}{1998}.
\newblock \bibinfo{title}{{Randomization and the American put}}.
\newblock \bibinfo{journal}{Review of Financial Studies} \bibinfo{volume}{11},
  \bibinfo{pages}{597--626}.
\bibitem[{Colton(1976)}]{colton1976solution}
\bibinfo{author}{Colton, D.}, \bibinfo{year}{1976}.
\newblock \bibinfo{title}{Solution of boundary value problems by the method of
  integral operators}.
\newblock \bibinfo{publisher}{Pitman London}.
\bibitem[{Colton and Reemtsen(1984)}]{colton1984numerical}
\bibinfo{author}{Colton, D.}, \bibinfo{author}{Reemtsen, R.},
  \bibinfo{year}{1984}.
\newblock \bibinfo{title}{{The numerical solution of the inverse Stefan problem
  in two space variables}}.
\newblock \bibinfo{journal}{SIAM Journal on Applied Mathematics}
  \bibinfo{volume}{44}, \bibinfo{pages}{996--1013}.
\bibitem[{Colton and Watzlawek(1977)}]{ColtonWatzlawek1977}
\bibinfo{author}{Colton, D.}, \bibinfo{author}{Watzlawek, W.},
  \bibinfo{year}{1977}.
\newblock \bibinfo{title}{Complete families of solutions to the heat equation
  and generalized heat equation in $\mathbb{R}^{n}$}.
\newblock \bibinfo{journal}{Journal of Differential Equations}
  \bibinfo{volume}{25}, \bibinfo{pages}{96 -- 107}.
\bibitem[{Colton(1980)}]{colton1980analytic}
\bibinfo{author}{Colton, D.L.}, \bibinfo{year}{1980}.
\newblock \bibinfo{title}{Analytic theory of partial differential equations}.
\newblock \bibinfo{publisher}{Pitman}.
\bibitem[{Davis and Rabinowitz(1984)}]{DavisRabinowitz1984}
\bibinfo{author}{Davis, P.J.}, \bibinfo{author}{Rabinowitz, P.},
  \bibinfo{year}{1984}.
\newblock \bibinfo{title}{Methods of numerical integration. Second edition}.
\newblock \bibinfo{publisher}{Academic Press}, \bibinfo{address}{San Diego,
  California}.
\bibitem[{Doicu et~al.(2000)Doicu, Eremin and Wriedt}]{Doicu2000}
\bibinfo{author}{Doicu, A.}, \bibinfo{author}{Eremin, Y.A.},
  \bibinfo{author}{Wriedt, T.}, \bibinfo{year}{2000}.
\newblock \bibinfo{title}{Acoustic and electromagnetic scattering analysis
  using discrete sources} .
\bibitem[{Duffie et~al.(1993)Duffie, Harrison et~al.}]{duffie1993arbitrage}
\bibinfo{author}{Duffie, J.D.}, \bibinfo{author}{Harrison, J.M.}, et~al.,
  \bibinfo{year}{1993}.
\newblock \bibinfo{title}{{Arbitrage pricing of Russian options and perpetual
  lookback options}}.
\newblock \bibinfo{journal}{The Annals of Applied Probability}
  \bibinfo{volume}{3}, \bibinfo{pages}{641--651}.
\bibitem[{Duistermaat et~al.(2005)Duistermaat, Kyprianou and van
  Schaik}]{duistermaat2005finite}
\bibinfo{author}{Duistermaat, J.}, \bibinfo{author}{Kyprianou, A.E.},
  \bibinfo{author}{van Schaik, K.}, \bibinfo{year}{2005}.
\newblock \bibinfo{title}{{Finite expiry Russian options}}.
\newblock \bibinfo{journal}{Stochastic Processes and their Applications}
  \bibinfo{volume}{115}, \bibinfo{pages}{609--638}.
\bibitem[{Ekstr{\"o}m(2004)}]{ekstrom2004russian}
\bibinfo{author}{Ekstr{\"o}m, E.}, \bibinfo{year}{2004}.
\newblock \bibinfo{title}{Russian options with a finite time horizon}.
\newblock \bibinfo{journal}{Journal of Applied Probability}
  \bibinfo{volume}{41}, \bibinfo{pages}{313--326}.
\bibitem[{Fairweather and Karageorghis(1998)}]{Fairweather1998}
\bibinfo{author}{Fairweather, G.}, \bibinfo{author}{Karageorghis, A.},
  \bibinfo{year}{1998}.
\newblock \bibinfo{title}{The method of fundamental solutions for elliptic
  boundary value problems}.
\newblock \bibinfo{journal}{Advances in Computational Mathematics}
  \bibinfo{volume}{9}, \bibinfo{pages}{69}.
\bibitem[{Hansen(1994)}]{hansen1994regularization}
\bibinfo{author}{Hansen, P.C.}, \bibinfo{year}{1994}.
\newblock \bibinfo{title}{Regularization tools: A {MATLAB} package for analysis
  and solution of discrete ill-posed problems}.
\newblock \bibinfo{journal}{Numerical Algorithms} \bibinfo{volume}{6},
  \bibinfo{pages}{1--35}.
\bibitem[{Herrera-Gomez and Porter(2017)}]{herreraporter2017}
\bibinfo{author}{Herrera-Gomez, A.}, \bibinfo{author}{Porter, R.M.},
  \bibinfo{year}{2017}.
\newblock \bibinfo{title}{Mixed linear-nonlinear least squares regression}.
\newblock \bibinfo{journal}{arXiv preprint arXiv:1703.04181} .
\bibitem[{Jeon et~al.(2016)Jeon, Han, Kim and Kang}]{jeon2016integral}
\bibinfo{author}{Jeon, J.}, \bibinfo{author}{Han, H.}, \bibinfo{author}{Kim,
  H.}, \bibinfo{author}{Kang, M.}, \bibinfo{year}{2016}.
\newblock \bibinfo{title}{{An integral equation representation approach for
  valuing Russian options with a finite time horizon}}.
\newblock \bibinfo{journal}{Communications in Nonlinear Science and Numerical
  Simulation} \bibinfo{volume}{36}, \bibinfo{pages}{496--516}.
\bibitem[{Kamenov(2008)}]{kamenov2008bachelier}
\bibinfo{author}{Kamenov, A.A.}, \bibinfo{year}{2008}.
\newblock \bibinfo{title}{Bachelier-version of {Russian} option with a finite
  time horizon}.
\newblock \bibinfo{journal}{Teoriya Veroyatnostei i ee Primeneniya}
  \bibinfo{volume}{53}, \bibinfo{pages}{576--587}.
\bibitem[{Kamenov(2014)}]{kamenov2014Dissertation}
\bibinfo{author}{Kamenov, A.A.}, \bibinfo{year}{2014}.
\newblock \bibinfo{title}{Non-additive problems about optimal stopping for
  stationary diffusions (in Russian)}.
\newblock Ph.D. thesis. Lomonosov Moscow State University.
  \bibinfo{address}{Moscow}.
\bibitem[{Khmelnytskaya et~al.(2013)Khmelnytskaya, Kravchenko, Torba and
  Tremblay}]{khmelnytskaya2013wave}
\bibinfo{author}{Khmelnytskaya, K.V.}, \bibinfo{author}{Kravchenko, V.V.},
  \bibinfo{author}{Torba, S.M.}, \bibinfo{author}{Tremblay, S.},
  \bibinfo{year}{2013}.
\newblock \bibinfo{title}{Wave polynomials, transmutations and {Cauchy's}
  problem for the {Klein--Gordon} equation}.
\newblock \bibinfo{journal}{Journal of Mathematical Analysis and Applications}
  \bibinfo{volume}{399}, \bibinfo{pages}{191--212}.
\bibitem[{Kimura(2008)}]{kimura2008valuing}
\bibinfo{author}{Kimura, T.}, \bibinfo{year}{2008}.
\newblock \bibinfo{title}{{Valuing finite-lived Russian options}}.
\newblock \bibinfo{journal}{European Journal of Operational Research}
  \bibinfo{volume}{189}, \bibinfo{pages}{363--374}.
\bibitem[{Kravchenko et~al.(2019)Kravchenko, Kravchenko and
  Torba}]{KrKrTorba2017}
\bibinfo{author}{Kravchenko, I.V.}, \bibinfo{author}{Kravchenko, V.V.},
  \bibinfo{author}{Torba, S.M.}, \bibinfo{year}{2019}.
\newblock \bibinfo{title}{Solution of parabolic free boundary problems using
  transmuted heat polynomials}.
\newblock \bibinfo{journal}{Mathematical Methods in the Applied Sciences}
\bibinfo{volume}{42}, \bibinfo{pages}{5094--5105}.
\bibitem[{Kravchenko et~al.(2016)Kravchenko, Morelos and
  Torba}]{kravchenko2016liouville}
\bibinfo{author}{Kravchenko, V.V.}, \bibinfo{author}{Morelos, S.},
  \bibinfo{author}{Torba, S.M.}, \bibinfo{year}{2016}.
\newblock \bibinfo{title}{Liouville transformation, analytic approximation of
  transmutation operators and solution of spectral problems}.
\newblock \bibinfo{journal}{Applied Mathematics and Computation}
  \bibinfo{volume}{273}, \bibinfo{pages}{321--336}.
\bibitem[{Kravchenko et~al.(2017a)Kravchenko, Navarro and
  Torba}]{KravchenkoNavarroTorba2017}
\bibinfo{author}{Kravchenko, V.V.}, \bibinfo{author}{Navarro, L.J.},
  \bibinfo{author}{Torba, S.M.}, \bibinfo{year}{2017}a.
\newblock \bibinfo{title}{{Representation of solutions to the one-dimensional
  Schr{\"o}dinger equation in terms of Neumann series of Bessel functions}}.
\newblock \bibinfo{journal}{Applied Mathematics and Computation}
  \bibinfo{volume}{314}, \bibinfo{pages}{173--192}.
\bibitem[{Kravchenko et~al.(2017b)Kravchenko, Otero and
  Torba}]{KravchenkoOtero2017}
\bibinfo{author}{Kravchenko, V.V.}, \bibinfo{author}{Otero, J.A.},
  \bibinfo{author}{Torba, S.M.}, \bibinfo{year}{2017}b.
\newblock \bibinfo{title}{Analytic approximation of solutions of parabolic
  partial differential equations with variable coefficients}.
\newblock \bibinfo{journal}{Advances in Mathematical Physics}
  \bibinfo{volume}{2017}.
\bibitem[{Kravchenko and Porter(2010)}]{KravchenkoPorter10}
\bibinfo{author}{Kravchenko, V.V.}, \bibinfo{author}{Porter, R.M.},
  \bibinfo{year}{2010}.
\newblock \bibinfo{title}{Spectral parameter power series for {Sturm-Liouville}
  problems}.
\newblock \bibinfo{journal}{Mathematical Methods in the Applied Sciences}
  \bibinfo{volume}{33}, \bibinfo{pages}{459--468}.
\bibitem[{Kravchenko and Torba(2018)}]{KravchenkoTorba_Neumann_2018}
\bibinfo{author}{Kravchenko, V.V.}, \bibinfo{author}{Torba, S.M.},
  \bibinfo{year}{2018}.
\newblock \bibinfo{title}{A {Neumann} series of {Bessel} functions
  representation for solutions of {Sturm--Liouville} equations}.
\newblock \bibinfo{journal}{Calcolo} \bibinfo{volume}{55}, \bibinfo{pages}{11}.
\bibitem[{Kupradze(1967)}]{Kupradze1967}
\bibinfo{author}{Kupradze, V.D.}, \bibinfo{year}{1967}.
\newblock \bibinfo{title}{On the approximate solution of problems in
  mathematical physics}.
\newblock \bibinfo{journal}{Russian Mathematical Surveys} \bibinfo{volume}{22},
  \bibinfo{pages}{58--108}.
\bibitem[{Kuznetsov(2013)}]{kuznetsov2013}
\bibinfo{author}{Kuznetsov, A.}, \bibinfo{year}{2013}.
\newblock \bibinfo{title}{On the convergence of the {Gaver--Stehfest}
  algorithm}.
\newblock \bibinfo{journal}{SIAM Journal on Numerical Analysis}
  \bibinfo{volume}{51}, \bibinfo{pages}{2984--2998}.
\bibitem[{Lawson and Hanson(1995)}]{lawson1995solving}
\bibinfo{author}{Lawson, C.L.}, \bibinfo{author}{Hanson, R.J.},
  \bibinfo{year}{1995}.
\newblock \bibinfo{title}{Solving least squares problems}.
  volume~\bibinfo{volume}{15} of \textit{\bibinfo{series}{Classics in Applied
  Mathematics}}.
\newblock \bibinfo{publisher}{Society for Industrial and Applied Mathematics
  (SIAM)}, \bibinfo{address}{Philadelphia, PA}.
\newblock \bibinfo{note}{Revised reprint of the 1974 original}.
\bibitem[{Madsen and Nielsen(2010)}]{MadsenNielsen2010}
\bibinfo{author}{Madsen, K.}, \bibinfo{author}{Nielsen, H.},
  \bibinfo{year}{2010}.
\newblock \bibinfo{title}{Introduction to optimization and data fitting}.
\newblock \bibinfo{publisher}{Technical University of Denmark}.
\bibitem[{Merton(1973)}]{Merton1973}
\bibinfo{author}{Merton, R.C.}, \bibinfo{year}{1973}.
\newblock \bibinfo{title}{Theory of rational option pricing}.
\newblock \bibinfo{journal}{Bell Journal of Economics and Management Science}
  \bibinfo{volume}{4}, \bibinfo{pages}{141--183}.
\bibitem[{Nocedal and Wright(2006)}]{wright2006numerical}
\bibinfo{author}{Nocedal, J.}, \bibinfo{author}{Wright, S.J.},
  \bibinfo{year}{2006}.
\newblock \bibinfo{title}{Numerical optimization}.
\newblock \bibinfo{publisher}{Springer}, \bibinfo{address}{New York}.
\bibitem[{Peskir(2005)}]{peskir2005russian}
\bibinfo{author}{Peskir, G.}, \bibinfo{year}{2005}.
\newblock \bibinfo{title}{{The Russian option: finite horizon}}.
\newblock \bibinfo{journal}{Finance and Stochastics} \bibinfo{volume}{9},
  \bibinfo{pages}{251--267}.
\bibitem[{Peskir and Shiryaev(2006)}]{PeskirShiryaev06}
\bibinfo{author}{Peskir, G.}, \bibinfo{author}{Shiryaev, A.},
  \bibinfo{year}{2006}.
\newblock \bibinfo{title}{Optimal stopping and free-boundary problems}.
\newblock \bibinfo{publisher}{Birkh\"auser Verlag}.
\bibitem[{Polyanin(2001)}]{Polyanin2001}
\bibinfo{author}{Polyanin, A.D.}, \bibinfo{year}{2001}.
\newblock \bibinfo{title}{Handbook of linear partial differential equations for
  engineers and scientists}.
\newblock \bibinfo{publisher}{CRC Press}.
\bibitem[{Reemtsen and Lozano(1982)}]{Reemtsen1982}
\bibinfo{author}{Reemtsen, R.}, \bibinfo{author}{Lozano, C.J.},
  \bibinfo{year}{1982}.
\newblock \bibinfo{title}{An approximation technique for the numerical solution
  of a {Stefan} problem}.
\newblock \bibinfo{journal}{Numerische Mathematik} \bibinfo{volume}{38},
  \bibinfo{pages}{141--154}.
\bibitem[{Rose(1960)}]{rose1960method}
\bibinfo{author}{Rose, M.E.}, \bibinfo{year}{1960}.
\newblock \bibinfo{title}{A method for calculating solutions of parabolic
  equations with a free boundary}.
\newblock \bibinfo{journal}{Mathematics of Computation} ,
  \bibinfo{pages}{249--256}.
\bibitem[{Rosenbloom and Widder(1959)}]{rosenbloom1959expansions}
\bibinfo{author}{Rosenbloom, P.}, \bibinfo{author}{Widder, D.},
  \bibinfo{year}{1959}.
\newblock \bibinfo{title}{Expansions in terms of heat polynomials and
  associated functions}.
\newblock \bibinfo{journal}{Transactions of the American Mathematical Society}
  \bibinfo{volume}{92}, \bibinfo{pages}{220--266}.
\bibitem[{Sarsengeldin et~al.(2014)Sarsengeldin, Arynov, Zhetibayeva and
  Guvercin}]{sarsengeldin2014analytical}
\bibinfo{author}{Sarsengeldin, M.}, \bibinfo{author}{Arynov, A.},
  \bibinfo{author}{Zhetibayeva, A.}, \bibinfo{author}{Guvercin, S.},
  \bibinfo{year}{2014}.
\newblock \bibinfo{title}{Analytical solutions of heat equation by heat
  polynomials}.
\newblock \bibinfo{journal}{Bulletin of National Academy of Sciences of the
  Republic of Kazakhstan} \bibinfo{volume}{5}, \bibinfo{pages}{21--27}.
\bibitem[{Shepp and Shiryaev(1993)}]{shepp1993russian}
\bibinfo{author}{Shepp, L.}, \bibinfo{author}{Shiryaev, A.N.},
  \bibinfo{year}{1993}.
\newblock \bibinfo{title}{{The Russian option: reduced regret}}.
\newblock \bibinfo{journal}{The Annals of Applied Probability} ,
  \bibinfo{pages}{631--640}.
\bibitem[{Shepp and Shiryaev(1995)}]{shepp1995new}
\bibinfo{author}{Shepp, L.A.}, \bibinfo{author}{Shiryaev, A.N.},
  \bibinfo{year}{1995}.
\newblock \bibinfo{title}{{A new look at pricing of the Russian option}}.
\newblock \bibinfo{journal}{Theory of Probability and Its Applications}
  \bibinfo{volume}{39}, \bibinfo{pages}{103--119}.
\bibitem[{Szeg\"o(1975)}]{Szego75}
\bibinfo{author}{Szeg\"o, G.}, \bibinfo{year}{1975}.
\newblock \bibinfo{title}{Orthogonal polynomials, 4th ed.}
\newblock \bibinfo{publisher}{American Mathematical Society}.
\bibitem[{Widder(1962)}]{widder1962}
\bibinfo{author}{Widder, D.V.}, \bibinfo{year}{1962}.
\newblock \bibinfo{title}{Analytic solutions of the heat equation}.
\newblock \bibinfo{journal}{Duke Math. J.} \bibinfo{volume}{29},
  \bibinfo{pages}{497--503}.

\end{thebibliography}

\appendix

\renewcommand{\theequation}{\Alph{section}.\arabic{equation}}
\newtheorem{cor}{Corollary}[section]
\setcounter{equation}{0}
\setcounter{equation}{0}

\section{Appendix}

\subsection{\label{Sec_Ap_InfHor}Russian option with infinite horizon under the BSM model}

For the sake of completeness, we include the formula of \cite{shepp1993russian}
for the pricing of the perpetual Russian option. For $\delta >0$, the upper
boundary value is given by%
\begin{equation*}
b_{\infty }=1-\left( \frac{d_{2}(1-d_{1})}{d_{1}(1-d_{2})}\right) ^{\frac{1}{%
d_{1}-d_{2}}},
\end{equation*}%
where $d_{i}$, with $i\in{1,2}$, are the solutions to the quadratic equation
\begin{equation*}
\frac{1}{2}\sigma ^{2}x^{2}+(r-\delta -\frac{1}{2}\sigma ^{2})x-r=0.
\end{equation*}%
The value of the option is obtained from
\begin{equation*}
u_{\infty }=\frac{1}{d_{2}-d_{1}}\left\{ d_{2}\left( \frac{s}{b_{\infty }}%
\right) ^{d_{1}}-d_{1}\left( \frac{s}{b_{\infty }}\right) ^{d_{2}}\right\}.
\end{equation*}%
The detailed analysis of this problem can be consulted in \citet[Section VII, $\S $26]{PeskirShiryaev06}, \cite{kimura2008valuing} and the references
therein.

\subsection{Transmuted heat polynomials}

The heat polynomials are defined for $n\in\mathbb{N}$ as---see, e.g., \cite%
{rosenbloom1959expansions} and \cite{widder1962},
\begin{equation*}
h_{n}(x,t) =\sum \limits_{k=0}^{\left[ n/2\right]
}c_{k}^{n}x^{n-2k}t^{k},
\end{equation*}
where $\left[ \cdot\right] $ denotes the entire part of the number and
\begin{equation*}
c_{k}^{n}=\frac{n!}{\left( n-2k\right) !k!}.
\end{equation*}
The first five heat polynomials are%
\begin{align*}
h_{0}\left( x,t\right) & =1,\text{ \ \ \ }h_{1}\left( x,t\right) =x,\text{ \
\ \ }h_{2}\left( x,t\right) =x^{2}+2t, \\
h_{3}\left( x,t\right) & =x^{3}+6xt,\text{ \ \ \ }h_{4}\left( x,t\right)
=x^{4}+12x^{2}t+12t^{2}.
\end{align*}

The set of heat polynomials $\left\{ h_{n}\right\} _{n\in \mathbb{N}\cup
\left\{ 0\right\} }$ represents CSS for the heat equation%
\begin{equation}
u_{xx}=u_{t}  \label{Eq_Heat_1}
\end{equation}%
on any domain $D\left( s\right) $ defined by \eqref{D(s)_1}---see \cite{ColtonWatzlawek1977}.

Similarly to \cite{KrKrTorba2017}, we will call the functions $H_{n}=\mathbf{%
T}\left[ h_{n}\right] $ the \textbf{transmuted heat polynomials\footnote{In \cite{KrKrTorba2017} it is analyzed the case with $p\equiv 1$ and $r\equiv 1$
.}}. As corollary of Theorem \ref{Teor_Phi_x} we can show that $H_{n}$
are solutions to equation \eqref{eq_FBP1_1}, i.e., $\left( \mathbf{C}%
-\partial _{t}\right) H_{n}\left( y,t\right) =0$. Moreover, the set $\left\{
H_{n}\right\} _{n\in \mathbb{N}}$ is a CSS for \eqref{eq_FBP1_1} on any domain $%
D\left( s\right) $ defined by \eqref{D(s)_1} due to Proposition \ref{Prop Complete system_1} and the completeness of the
system of heat polynomials \cite{ColtonWatzlawek1977}.

\begin{cor}
\label{Cor_Hn_1} The transmuted heat polynomials admit the following form%
\begin{equation}
H_{n}(y,t) =\sum \limits_{k=0}^{\left[ n/2\right]
}c_{k}^{n}\Phi_{n-2k}\left( y\right) t^{k}.  \label{eq_Hn}
\end{equation}
\end{cor}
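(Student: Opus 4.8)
The plan is to exploit the linearity of the transmutation operator $\mathbf{T}$ together with the identity \eqref{eq_Txn}, which identifies the transmuted monomials with the formal powers. Since $H_n = \mathbf{T}[h_n]$ and the heat polynomial $h_n(x,t)=\sum_{k=0}^{[n/2]}c_k^n x^{n-2k}t^k$ is, for each fixed time, a polynomial in the single spatial variable $x$, the operator $\mathbf{T}$ is to be applied in the $x$-variable only, with the time variable $t$ carried along as a parameter.

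First I would record that $\mathbf{T}$ acts solely on the spatial variable, as is clear from its representation in Theorem \ref{Teor_Phi_x}, whose kernel and integration limits involve only $y$ and the spatial integration variable; consequently the powers $t^k$ pass through $\mathbf{T}$ unchanged, as multiplicative constants. By linearity of $\mathbf{T}$ the finite sum defining $h_n$ can then be transmuted term by term,
\[
H_n(y,t)=\mathbf{T}\Bigl[\,\textstyle\sum_{k=0}^{[n/2]}c_k^n x^{n-2k}t^k\Bigr](y)=\sum_{k=0}^{[n/2]}c_k^n\,t^k\,\mathbf{T}[x^{n-2k}](y).
\]
Next I would invoke \eqref{eq_Txn}, namely $\mathbf{T}[x^m]=\Phi_m(y)$, with $m=n-2k$, to replace each transmuted monomial by the corresponding formal power $\Phi_{n-2k}(y)$. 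This immediately yields the claimed representation \eqref{eq_Hn}.

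There is no serious obstacle here. The only point requiring care is the bookkeeping that distinguishes the role of the time variable (a parameter under $\mathbf{T}$) from that of the spatial variable on which $\mathbf{T}$ genuinely acts, a distinction slightly obscured by the notational clash with the dummy integration variable also denoted $t$ in the definition of $\mathbf{T}$. That $H_n$ is in fact a solution of \eqref{eq_FBP1_1} is not part of the statement to be proved and already follows from the intertwining $(\mathbf{C}-\partial_t)\mathbf{T}=\mathbf{T}(\partial_{xx}-\partial_t)$ recorded before the corollary.
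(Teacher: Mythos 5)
Your proof is correct and is essentially identical to the paper's own argument: both apply $\mathbf{T}$ termwise to $h_n$ by linearity (with $t$ carried as a parameter) and then invoke the identity $\mathbf{T}[x^{n-2k}]=\Phi_{n-2k}$ from Theorem \ref{Teor_Phi_x}. Your added remark clarifying that $\mathbf{T}$ acts only in the spatial variable is a sensible piece of bookkeeping that the paper leaves implicit.
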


\begin{proof}
This equality is an immediate corollary of Theorem \ref{Teor_Phi_x}.
Indeed, we have $H_{n}(x,t) =\mathbf{T}[h_{n}(x,t)] =\sum_{k=0}^{\left[ n/2\right] }c_{k}^{n}\mathbf{%
T}[x^{n-2k}] t^{k}=\sum_{k=0}^{\left[ n/2\right]
}c_{k}^{n}\Phi_{n-2k}\left( y\right) t^{k}$, where Theorem \ref{Teor_Phi_x}
is used.
\end{proof}

The explicit form \eqref{eq_Hn} of the functions $H_{n}$ allows the
construction of the approximate solution to Problem \ref{Problem_FBP_1} by the
THP. The presented here is the extension of the results from \cite{KrKrTorba2017}.

\end{document}